
\documentclass[12pt]{article}

\usepackage{amsmath,amsthm,amssymb}
\usepackage[top=30truemm,bottom=30truemm,left=25truemm,right=25truemm]{geometry}
\usepackage[dvips]{graphicx,color,psfrag}
\usepackage{bm}

\theoremstyle{plain}
\newtheorem{definition}{Definition}[section]
\newtheorem{thm}[definition]{Theorem}

\newtheorem{lem}[definition]{Lemma}
\newtheorem{cor}[definition]{Corollary}
\newtheorem{rei}[definition]{Example}
\newtheorem{rem}[definition]{Remark}

\DeclareMathOperator{\sgn}{sgn}
\let\Re\relax
\DeclareMathOperator{\Re}{Re}

\title{Generalized Dedekind's theorem and its application to integer group determinants}
\author{Naoya Yamaguchi and Yuka Yamaguchi}
\date{\today}

\begin{document}

\maketitle

\begin{abstract}
In this paper, we give a refinement of a generalized Dedekind's theorem. 
In addition, we show that all possible values of integer group determinants of any group are also possible values of integer group determinants of its any abelian subgroup. 
By applying the refinement of a generalized Dedekind's theorem, 
we determine all possible values of integer group determinants of the direct product group of the cyclic group of order $8$ and the cyclic group of order $2$. 
\end{abstract}

\section{Introduction}
For a finite group $G$, 
let $x_{g}$ be an indeterminate for each $g \in G$ and let $\mathbb{Z}[x_{g}]$ be the multivariate polynomial ring in $x_{g}$ over $\mathbb{Z}$. 
The group matrix $M_{G}(x_{g})$ and the group determinant $\Theta_{G}(x_{g})$ of $G$ were defined by Dedekind as follows: 
$$
M_{G}(x_{g}) := \left( x_{g h^{- 1}} \right)_{g, h \in G}, \quad \Theta_{G}(x_{g}) := \det{M_{G}(x_{g})} \in \mathbb{Z}[x_{g}]. 
$$
When the elements of $G$ are reordered arbitrarily, 
the group matrix $M$ formed according to this reordering is of the form $M = P^{- 1} M_{G}(x_{g}) P$, 
where $P$ is an appropriate permutation matrix. 
Thus, $\Theta_{G}(x_{g})$ is invariant under any reordering of the elements of $G$. 
For a finite group $G$, 
let $\widehat{G}$ be a complete set of representatives of the equivalence classes of irreducible representations of $G$ over $\mathbb{C}$. 
Around 1880, 
for the case that $G$ is abelian, 
Dedekind gave the irreducible factorization of $\Theta_{G}(x_{g})$ over $\mathbb{C}$: 
{\it Let $G$ be a finite abelian group. 
Then}
$$
\Theta_{G}(x_{g}) = \prod_{\chi \in \widehat{G}} \sum_{g \in G} \chi(g) x_{g}. 
$$
This is called Dedekind's theorem. 
In 1896, 
Frobenius \cite{Frobenius1968gruppen} gave the irreducible factorization of $\Theta_{G}(x_{g})$ over $\mathbb{C}$ for any finite group: 
{\it Let $G$ be a finite group. Then }
$$
\Theta_{G}(x_{g}) = \prod_{\varphi \in \widehat{G}} \det{\left( \sum_{g \in G} \varphi(g) x_{g} \right)^{\deg{\varphi}}}. 
$$
This is the most well known generalization of Dedekind's theorem. 
This generalization is obtained from the decomposition of the regular representation $L$ of $G$ as a direct sum of irreducible representations and the expression $M_{G}(x_{g}) = \sum_{g \in G} x_{g} L(g)$. 
Frobenius created the character theory of finite groups in the process of obtaining the irreducible factorization. 
For the history on the theory, see, e.g., \cite{MR1659232, MR1554141, MR1554156, MR0446837, MR803326}. 
On the other hand, another generalization of Dedekind's theorem was given in \cite{MR3622295}: 
{\it Let $G$ be a finite abelian group and let $H$ be a subgroup of $G$. 
For every $h \in H$, there exists a homogeneous polynomial $A_{h} \in \mathbb{C}[x_{g}]$ satisfying $\deg{A_{h}} = \left| G / H \right|$ and 
\begin{align}
\Theta_{G}(x_{g}) = \prod_{\chi \in \widehat{H}} \sum_{h \in H} \chi(h) A_{h} = \Theta_{H}(A_{h}). 
\end{align}
If $H = G$, then we can take $A_{h} = x_{h}$ for each $h \in H$. }
This generalization shows that the group determinant of an abelian group can be written by the group determinant of any subgroup. 
Let ${\rm C}_{n} = \left\{ \overline{0}, \overline{1}, \ldots, \overline{n - 1} \right\}$ be the cyclic group of order $n$. 
The matrix $M_{{\rm C}_{n}}(x_{g})$ is similar to the circulant matrix of order $n$. 
That is, the circulant determinant is a special case of the group determinant. 
For the circulant determinant, 
Laquer \cite[Theorem~2]{MR624127} gave the following factorization in 1980: 
{\it 
Let $n = r s$, where $r$ and $s$ are relatively prime, and let $x_{j} := x_{\overline{j - 1}}$ for any $1 \leq j \leq n$. 
Then 
\begin{align*}
\Theta_{{\rm C}_{n}}{\left( x_{j} \right)} = \prod_{l = 0}^{s - 1} \Theta_{{\rm C}_{r}}{\left( y_{j}^{l} \right)}, \quad y_{j}^{l} := \sum_{k = 0}^{s - 1} \zeta_{s}^{l (k r + j - 1)} x_{k r + j}, 
\end{align*}
where $\zeta_{s}$ is a primitive $s$-th root of unity. 
}
We call this theorem Laquer's theorem. 
In recently, Laquer's theorem was generalized as follows \cite[Theorem~1.1]{Remark}: 
{\it
Let $G = H \times K$ be a direct product of finite abelian groups. 
Then we have }
\begin{align}
\Theta_{G}(x_{g}) = \prod_{\chi \in \widehat{K}} \Theta_{H}(y_{h}^{\chi}), \quad  y_{h}^{\chi} = \sum_{k \in K} \chi(k) x_{h k}. 
\end{align}

In this paper, we give a refinement of (1), 
which is a generalization of (2).

\begin{thm}\label{thm:1.1}
Let $G$ be a finite abelian group, 
let $H$ be a subgroup of $G$ and let 
\begin{align*}
\widehat{G}_{H} := \left\{ \chi \in \widehat{G} \mid \chi(h) = 1, \: h \in H \right\}, \quad G = \displaystyle\bigsqcup_{t \in T} t H, \quad \widehat{G} = \displaystyle\bigsqcup_{\chi \in X} \chi \widehat{G}_{H}. 
\end{align*}
Then we have 
\begin{align*}
\Theta_{G}(x_{g}) = \prod_{\chi \in X} \Theta_{G / H}{\left( y_{t H}^{\chi} \right)} = \Theta_{H}(z_{h}), 
\end{align*}
where 
$$
y_{t H}^{\chi} := \sum_{h \in H} \chi(t h) x_{t h}, \quad z_{h} := \frac{1}{| H |}\sum_{\chi \in X} \chi(h^{-1}) \Theta_{G/H}{\left( y_{t H}^{\chi} \right)}. 
$$
\end{thm}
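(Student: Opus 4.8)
The plan is to obtain both equalities by applying Dedekind's theorem three times --- to $G$, to $G/H$, and to $H$ --- and combining the results with two elementary facts about character groups of finite abelian groups. First I would record these facts. Let $\pi \colon G \to G/H$ be the quotient homomorphism. Inflation $\bar\eta \mapsto \bar\eta \circ \pi$ is an isomorphism $\widehat{G/H} \to \widehat{G}_{H}$, and restriction $\chi \mapsto \chi|_{H}$ is a surjection $\widehat{G} \to \widehat{H}$ whose kernel is precisely $\widehat{G}_{H}$; hence the chosen transversal $X$ of $\widehat{G}_{H}$ in $\widehat{G}$ is mapped bijectively onto $\widehat{H}$ by restriction. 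I would also check the routine fact that $y_{tH}^{\chi}$ depends only on the coset $tH$ (replacing $t$ by $t h_{0}$ with $h_{0} \in H$ just reindexes $\sum_{h \in H} \chi(th) x_{th}$), so that $\Theta_{G/H}(y_{tH}^{\chi})$ is well defined.

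For the first equality, start from Dedekind's theorem $\Theta_{G}(x_{g}) = \prod_{\psi \in \widehat{G}} \sum_{g \in G} \psi(g) x_{g}$ and group the factors using $\widehat{G} = \bigsqcup_{\chi \in X} \chi \widehat{G}_{H}$, so that every $\psi$ is uniquely $\chi \eta$ with $\chi \in X$ and $\eta \in \widehat{G}_{H}$. Fix $\chi$ and write $g = t h$ with $t \in T$, $h \in H$; since $\eta$ is trivial on $H$ it factors through $\pi$, so $\eta(th) = \bar\eta(tH)$ for the corresponding $\bar\eta \in \widehat{G/H}$, and therefore $\sum_{g \in G} \chi(g)\eta(g) x_{g} = \sum_{t \in T} \bar\eta(tH)\, y_{tH}^{\chi}$. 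Taking the product over $\eta \in \widehat{G}_{H}$, equivalently over $\bar\eta \in \widehat{G/H}$, and applying Dedekind's theorem to $G/H$ yields $\prod_{\eta \in \widehat{G}_{H}} \sum_{g \in G} \chi(g)\eta(g) x_{g} = \Theta_{G/H}(y_{tH}^{\chi})$; multiplying over $\chi \in X$ gives $\Theta_{G}(x_{g}) = \prod_{\chi \in X} \Theta_{G/H}(y_{tH}^{\chi})$.

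For the second equality, apply Dedekind's theorem to $H$: $\Theta_{H}(z_{h}) = \prod_{\lambda \in \widehat{H}} \sum_{h \in H} \lambda(h) z_{h}$. Substituting the definition of $z_{h}$ and interchanging the two sums gives $\sum_{h \in H} \lambda(h) z_{h} = \frac{1}{|H|} \sum_{\chi \in X} \Theta_{G/H}(y_{tH}^{\chi}) \sum_{h \in H} \lambda(h) \chi(h^{-1})$. Since $\chi(h^{-1}) = \overline{\chi(h)}$, the inner sum is $\sum_{h \in H} (\lambda \, \overline{\chi|_{H}})(h)$, which by orthogonality of characters of $H$ equals $|H|$ when $\chi|_{H} = \lambda$ and $0$ otherwise. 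By the restriction bijection between $X$ and $\widehat{H}$ there is exactly one $\chi \in X$ with $\chi|_{H} = \lambda$, so $\sum_{h \in H} \lambda(h) z_{h} = \Theta_{G/H}(y_{tH}^{\chi})$, whence $\Theta_{H}(z_{h}) = \prod_{\chi \in X} \Theta_{G/H}(y_{tH}^{\chi}) = \Theta_{G}(x_{g})$.

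I do not expect a genuine obstacle: the argument is an organized triple use of Dedekind's theorem together with character orthogonality. The points needing care are the bookkeeping --- that $y_{tH}^{\chi}$ is coset-independent, that inflation identifies $\widehat{G}_{H}$ with $\widehat{G/H}$, and that restriction identifies $X$ with $\widehat{H}$ --- and checking that the normalization $1/|H|$ in $z_{h}$ is exactly what the orthogonality relations require. A good consistency check is the degree count: each $\Theta_{G/H}(y_{tH}^{\chi})$ is homogeneous of degree $|G/H|$ in the $x_{g}$, so each $z_{h}$ has degree $|G/H|$, and then $\Theta_{H}(z_{h})$ has degree $|H| \cdot |G/H| = |G| = \deg \Theta_{G}$.
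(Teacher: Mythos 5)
Your proposal is correct. The first equality is proved exactly as in the paper: apply Dedekind's theorem to $G$, group the factors according to $\widehat{G} = \bigsqcup_{\chi \in X} \chi \widehat{G}_{H}$, identify $\widehat{G}_{H}$ with $\widehat{G/H}$ by inflation, and apply Dedekind's theorem to $G/H$. For the second equality, however, you take a genuinely different and more direct route. The paper invokes a group-algebra lemma (Lemma~2.1, from an earlier work) asserting that $\prod_{\chi \in \widehat{G}_{H}} \sum_{g} \chi(g) x_{g}\, g = \sum_{h \in H} A_{h}\, h$ for some polynomials $A_{h}$; it then applies the twisting homomorphism $g \mapsto \chi(g) g$ followed by the augmentation $g \mapsto 1$ to get $\Theta_{G/H}(y_{tH}^{\chi}) = \sum_{h} \chi(h) A_{h}$, concludes $\Theta_{G}(x_{g}) = \Theta_{H}(A_{h})$ using the bijection $X \to \widehat{H}$, and only at the end recovers $A_{h} = z_{h}$ by Fourier inversion. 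You run the inversion in the opposite direction: starting from the explicit formula for $z_{h}$, you compute $\sum_{h} \lambda(h) z_{h}$ by character orthogonality on $H$ and the bijection $X \to \widehat{H}$, obtaining $\Theta_{H}(z_{h}) = \prod_{\chi \in X} \Theta_{G/H}(y_{tH}^{\chi})$ directly from Dedekind's theorem for $H$. Your version is self-contained and arguably cleaner for Theorem~1.1 as stated; what the paper's detour through Lemma~2.1 buys is the identity $\sum_{h} z_{h}\, h = \prod_{\chi \in \widehat{G}_{H}} \sum_{g} \chi(g) x_{g}\, g$ in the group algebra, which is then essential for Remark~2.2 and the proof of Theorem~1.2 (the integrality and the expression $z_{h} = \Theta_{G/H}(y_{tH})_{h}$), neither of which your argument yields on its own.
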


For any $f(x_{g}) \in \mathbb{Z}[x_{g}]$, 
we denote by $f(x_{g})_{h}$ the sum of all monomials $c x_{g_{1}} x_{g_{2}} \cdots x_{g_{k}}$ in $f(x_{g})$ satisfying $g_{1} g_{2} \cdots g_{k} = h$. 
The following theorem gives another expression for $z_{h}$ in Theorem~$\ref{thm:1.1}$. 
When calculating $z_{h}$, 
the expression for $z_{h}$ in the following theorem might be more useful than one in Theorem~$\ref{thm:1.1}$ (see Example~$\ref{rei:2.3}$).

\begin{thm}\label{thm:1.2}
Let $y_{t H} := \sum_{h \in H} x_{t h}$. 
Then we have 
$$
z_{h} = \Theta_{G / H}{\left( y_{t H} \right)_{h}} \in \mathbb{Z}[x_{g}]. 
$$
\end{thm}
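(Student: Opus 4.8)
The plan is to exploit the natural $G$-grading of $\mathbb{Z}[x_{g}]$ under which a monomial $x_{g_{1}} x_{g_{2}} \cdots x_{g_{k}}$ has weight $g_{1} g_{2} \cdots g_{k} \in G$; in this grading $f(x_{g})_{h}$ is precisely the weight-$h$ homogeneous component of $f$. Two observations do all the work. First, $\Theta_{G/H}(y_{tH})$ is supported on weights lying in $H$, so that
\[ \Theta_{G/H}(y_{tH}) = \sum_{h \in H} \Theta_{G/H}(y_{tH})_{h}. \]
Second, for $\chi \in \widehat{G}$ the passage from $y_{tH}$ to $y_{tH}^{\chi}$ is realised by the $\mathbb{C}$-algebra automorphism $\phi_{\chi}$ of $\mathbb{C}[x_{g}]$ determined by $\phi_{\chi}(x_{g}) = \chi(g) x_{g}$, and $\phi_{\chi}$ multiplies the weight-$h$ component of any polynomial by the scalar $\chi(h)$. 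Granting these, $\Theta_{G/H}(y_{tH}^{\chi}) = \sum_{h \in H} \chi(h)\, \Theta_{G/H}(y_{tH})_{h}$, and substituting this into the definition of $z_{h}$ and applying orthogonality of the characters of $H$ isolates the single term $\Theta_{G/H}(y_{tH})_{h}$.

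For the first observation I would first record the general fact that for any finite abelian group $K$ every monomial $c\, Y_{k_{1}} Y_{k_{2}} \cdots Y_{k_{m}}$ occurring in the group determinant $\Theta_{K}(Y_{k})$ satisfies $k_{1} k_{2} \cdots k_{m} = e$. This is read off from the Leibniz expansion $\Theta_{K}(Y_{k}) = \sum_{\sigma} \sgn(\sigma) \prod_{k \in K} Y_{k \sigma(k)^{-1}}$, where $\sigma$ runs over permutations of $K$: the weight of the $\sigma$-term is $\prod_{k \in K} k \sigma(k)^{-1} = ( \prod_{k} k )( \prod_{k} \sigma(k) )^{-1} = e$, since $K$ is abelian and $\sigma$ permutes $K$. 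Applying this with $K = G/H$ and then substituting $Y_{tH} = y_{tH} = \sum_{h \in H} x_{th}$, every monomial of $\Theta_{G/H}(y_{tH})$ has the shape $x_{t_{1} h_{1}} \cdots x_{t_{m} h_{m}}$ with $t_{1} H \cdots t_{m} H = H$, hence weight $(t_{1} \cdots t_{m})(h_{1} \cdots h_{m}) \in H$. In particular $\Theta_{G/H}(y_{tH}) \in \mathbb{Z}[x_{g}]$, so each component $\Theta_{G/H}(y_{tH})_{h}$ lies in $\mathbb{Z}[x_{g}]$, which settles the integrality assertion.

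For the second observation, $\phi_{\chi}$ is a well-defined automorphism of $\mathbb{C}[x_{g}]$ with inverse $\phi_{\overline{\chi}}$ that fixes every scalar, hence commutes with evaluating the polynomial $\Theta_{G/H}$; since $\phi_{\chi}(y_{tH}) = \sum_{h \in H} \chi(th) x_{th} = y_{tH}^{\chi}$, we get $\Theta_{G/H}(y_{tH}^{\chi}) = \phi_{\chi}( \Theta_{G/H}(y_{tH}) )$. As $\phi_{\chi}$ sends a weight-$h$ monomial $c\, x_{g_{1}} \cdots x_{g_{k}}$ (so $g_{1} \cdots g_{k} = h$) to $\chi(g_{1}) \cdots \chi(g_{k})\, c\, x_{g_{1}} \cdots x_{g_{k}} = \chi(h)\, c\, x_{g_{1}} \cdots x_{g_{k}}$, the first observation gives $\Theta_{G/H}(y_{tH}^{\chi}) = \sum_{h' \in H} \chi(h')\, \Theta_{G/H}(y_{tH})_{h'}$. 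Then
\[ z_{h} = \frac{1}{|H|} \sum_{\chi \in X} \chi(h^{-1})\, \Theta_{G/H}(y_{tH}^{\chi}) = \frac{1}{|H|} \sum_{h' \in H} \Theta_{G/H}(y_{tH})_{h'} \sum_{\chi \in X} \chi(h^{-1} h'). \]
Because restriction $\widehat{G} \to \widehat{H}$ is surjective with kernel $\widehat{G}_{H}$, the map $\chi \mapsto \chi|_{H}$ is a bijection from $X$ onto $\widehat{H}$ (injectivity is immediate, and $|X| = |\widehat{G}| / |\widehat{G}_{H}| = |G| / |G/H| = |H| = |\widehat{H}|$), so $\sum_{\chi \in X} \chi(h^{-1} h') = \sum_{\psi \in \widehat{H}} \psi(h^{-1} h')$, which equals $|H|$ when $h' = h$ and $0$ otherwise. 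Hence $z_{h} = \Theta_{G/H}(y_{tH})_{h}$, as claimed.

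The only delicate point is the bookkeeping behind the first observation: one must keep separate the coarse $G/H$-grading on $\mathbb{C}[y_{tH}]$, with respect to which $\Theta_{G/H}$ is weight-$e$ homogeneous, and the finer $G$-grading on $\mathbb{C}[x_{g}]$, with respect to which the substitution $y_{tH} = \sum_{h \in H} x_{th}$ spreads a weight-$e$ monomial over all weights in $H$. Once that is set up correctly, the automorphism $\phi_{\chi}$ together with character orthogonality makes the computation entirely mechanical; the remaining input, the identification $\{ \chi|_{H} : \chi \in X \} = \widehat{H}$, is elementary, using divisibility of $\mathbb{C}^{\times}$ for the surjectivity of restriction.
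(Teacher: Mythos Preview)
Your argument is correct. Both your proof and the paper's hinge on the natural $G$-grading of $\mathbb{C}[x_{g}]$, but the routes differ. The paper works inside the group algebra $\mathbb{C}[x_{g}]G$: it invokes Lemma~2.1 together with Remark~2.2 to write $\sum_{h \in H} z_{h}\, h = \prod_{\chi \in \widehat{G}_{H}} \sum_{g} \chi(g) x_{g} g$, then rewrites the right-hand side as $\prod_{\chi \in \widehat{G/H}} \sum_{tH} \chi(tH) \sum_{h'} x_{th'}\, th'$ and simply reads off the coefficient of $h$, the group-algebra factor doing the weight bookkeeping automatically. You instead start from the explicit formula $z_{h} = \frac{1}{|H|} \sum_{\chi \in X} \chi(h^{-1}) \Theta_{G/H}(y_{tH}^{\chi})$, realise the passage $y_{tH} \mapsto y_{tH}^{\chi}$ via the grading-diagonal automorphism $\phi_{\chi}$, and invert by orthogonality of $\widehat{H}$. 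Your approach is more self-contained (it needs only the formula for $z_{h}$ from Theorem~1.1, not Lemma~2.1 or Remark~2.2) and makes the Fourier-inversion structure on $H$ explicit; the paper's approach is terser because the group-algebra identity from Lemma~2.1 packages the grading and the character sums in one stroke.
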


Theorem~$\ref{thm:1.1}$ is a refinement of (1) since $\left\{ \chi|_{H} \mid \chi \in X \right\} = \widehat{H}$ holds and we can take $A_{h} = z_{h}$ in (1). 
Note that for a finite abelian group $G$ and any subgroup $K$, 
there exists a subgroup $H$ of $G$ satisfying $K \cong G / H$. 
That is, Theorem~$\ref{thm:1.1}$ implies that $\Theta_{G}(x_{g})$ can also be expressed as a product of the group determinants of any subgroup. 
Thus, Theorem~$\ref{thm:1.1}$ derives (2). 
We apply Theorems~$\ref{thm:1.1}$ and $\ref{thm:1.2}$ to the study of the integer group determinant. 

A group determinant called an integer group determinant when its variables are integers. 
For a finite group $G$, let 
\begin{align*}
S(G) := \left\{ \Theta_{G}(x_{g}) \: | \: x_{g} \in \mathbb{Z} \right\}. 
\end{align*}
It immediately follows from $M_{G}(x_{g}) = \sum_{g \in G} x_{g} L(g)$ that $S(G)$ is a monoid. 
Determining $S(G)$ is an open problem. 
For the $G = {\rm C}_{n}$ cases, determining $S(G)$ is called Olga Taussky-Todd's circulant problem since Olga Taussky-Todd suggested it at the meeting of the American Mathematical Society in Hayward, California \cite{MR550657}. 
Even Olga Taussky-Todd's circulant problem remains as an open problem. 

For $S({\rm C}_{n})$, 
the following relation is known \cite[Lemma~3.6]{MR2914452}: 
{\it 
Let $n, q \geq 1$. 
If $q \mid n$, then }
\begin{align}
S({\rm C}_{n}) \subset S({\rm C}_{q}).
\end{align}
From Theorems~$\ref{thm:1.1}$ and $\ref{thm:1.2}$, 
we obtain a generalization of (3). 

\begin{cor}\label{cor:1.3}
Let $G$ be a finite abelian group and let $H$ be a subgroup of $G$. 
Then
$$
S(G) \subset S(H). 
$$
\end{cor}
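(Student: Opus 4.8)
The plan is to read the corollary off directly from the identity $\Theta_{G}(x_{g}) = \Theta_{H}(z_{h})$ in Theorem~\ref{thm:1.1}, using Theorem~\ref{thm:1.2} to guarantee that the substituted quantities $z_{h}$ remain integers. The point of the argument is thus almost entirely bookkeeping, once those two theorems are granted.

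First I would fix an arbitrary assignment of integers $x_{g} = a_{g} \in \mathbb{Z}$ for every $g \in G$; the goal is to realize $\Theta_{G}(a_{g})$ as a group determinant of $H$ with integer entries. By Theorem~\ref{thm:1.2}, each $z_{h}$ equals $\Theta_{G/H}(y_{tH})_{h}$, the $h$-part of the polynomial $\Theta_{G/H}(y_{tH})$ in which $y_{tH} = \sum_{h \in H} x_{th}$. Since $\Theta_{G/H}(y_{tH})$ is a polynomial in the $x_{g}$ with integer coefficients, so is each $z_{h}$; hence the evaluations $b_{h} := z_{h}\big|_{x_{g} = a_{g}} \in \mathbb{Z}$ are integers for all $h \in H$.

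Next I would specialize the identity $\Theta_{G}(x_{g}) = \Theta_{H}(z_{h})$ of Theorem~\ref{thm:1.1} at $x_{g} = a_{g}$. The left-hand side becomes $\Theta_{G}(a_{g})$, and the right-hand side becomes $\Theta_{H}(b_{h})$ with all $b_{h} \in \mathbb{Z}$, so $\Theta_{G}(a_{g}) = \Theta_{H}(b_{h}) \in S(H)$. As the $a_{g}$ were arbitrary, this gives $S(G) \subset S(H)$, which is the claim.

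The only genuine subtlety, and hence the step I would flag as the main obstacle, is the integrality of $z_{h}$: the formula for $z_{h}$ in Theorem~\ref{thm:1.1} involves the coefficients $\chi(h^{-1})$, which are roots of unity, together with a denominator $|H|$, so it is not at all transparent that evaluating at integers lands back in $\mathbb{Z}$. This is precisely what Theorem~\ref{thm:1.2} resolves, so once that theorem is available there is no further difficulty. In particular there is no need to invoke the isomorphism $K \cong G/H$ discussed after Theorem~\ref{thm:1.1}, since $H$ itself appears directly as a subgroup of $G$ in the factorization; and the boundary case $H = G$ is trivial, with $z_{h} = x_{h}$.
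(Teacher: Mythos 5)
Your argument is correct and is exactly the paper's: it deduces the corollary by combining $\Theta_{G}(x_{g}) = \Theta_{H}(z_{h})$ from Theorem~\ref{thm:1.1} with the integrality $z_{h} \in \mathbb{Z}[x_{g}]$ supplied by Theorem~\ref{thm:1.2}, then specializing to integer values. You have also correctly identified integrality of $z_{h}$ as the only nontrivial point.
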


Corollary~$\ref{cor:1.3}$ is generalized as follows. 

\begin{thm}\label{thm:1.4}
Let $G$ be a finite group and let $H$ be an abelian subgroup of $G$. 
Then
$$
\left\{ \alpha^{[G : H]} \mid \alpha \in S(H) \right\} \subset S(G) \subset S(H), 
$$
where $[G : H]$ is the index of $H$ in $G$. 
\end{thm}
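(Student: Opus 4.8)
The plan is to prove the two inclusions separately, using very different mechanisms. For the right-hand inclusion $S(G) \subset S(H)$, the idea is to relate the group matrix of $G$ to that of $H$ by choosing a transversal and exploiting the block structure. First I would fix a left transversal $t_1, \dots, t_r$ for $H$ in $G$, so $G = \bigsqcup_{i} t_i H$. Ordering the elements of $G$ by cosets, the group matrix $M_G(x_g)$ becomes an $r \times r$ array of $|H| \times |H|$ blocks, where the $(i,j)$ block records the products $t_i h (t_j h')^{-1} = t_i h h'^{-1} t_j^{-1}$. Since $H$ need not be normal this block is not itself a group matrix of $H$, but when the variables are specialized so that $x_g$ depends only on which coset $g$ lies in — more precisely, when we restrict attention to the sub-array and use that $M_G(x_g) = \sum_{g} x_g L(g)$ — the determinant of $M_G$ evaluated at integers still equals, up to a square factor coming from the index, a group determinant of $H$. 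The cleanest route is: for any choice of integers $x_g$, build from the $r$-tuple of $H$-indexed vectors a single specialization of $\Theta_H$ whose value matches $\Theta_G(x_g)$; concretely one shows $\Theta_G(x_g) = \Theta_H(w_h)$ for suitable integers $w_h$ obtained by a Frobenius-type reduction of the regular representation of $G$ restricted to $H$. This is exactly the abelian statement of Corollary~1.3 generalized: the proof there should go through verbatim once one notes that only the abelian-ness of $H$ (not of $G$) was used in producing integer-valued $z_h$ via Theorems~1.1 and~1.2 applied inside $H$.

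For the left-hand inclusion $\{\alpha^{[G:H]} : \alpha \in S(H)\} \subset S(G)$, the mechanism is induction of representations. Given integers $(x_h)_{h \in H}$, extend them to integers on all of $G$ by setting $x_g = 0$ for $g \notin H$; call this element of $\mathbb{Z}[G]$ simply $a = \sum_{h \in H} x_h h$. Then $M_G(x_g)$ is the matrix of right-multiplication (or left-multiplication) by $a$ on $\mathbb{C}[G]$ in the standard basis, and $\Theta_G(x_g) = \det(a \mid \mathbb{C}[G])$. Since $a$ is supported on $H$, the $\mathbb{C}[H]$-module $\mathbb{C}[G]$ is free of rank $[G:H]$, so as a map commuting with the relevant $H$-action, multiplication by $a$ on $\mathbb{C}[G]$ is conjugate to $[G:H]$ copies of multiplication by $a$ on $\mathbb{C}[H]$. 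Taking determinants gives $\Theta_G(x_g) = \bigl(\det(a \mid \mathbb{C}[H])\bigr)^{[G:H]} = \Theta_H(x_h)^{[G:H]}$, which is $\alpha^{[G:H]}$ with $\alpha = \Theta_H(x_h) \in S(H)$. No abelian hypothesis is needed here; abelian-ness of $H$ is only relevant for the right-hand inclusion.

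I expect the main obstacle to be the right-hand inclusion $S(G) \subset S(H)$. When $H$ is normal in $G$ there is a clean quotient and one can mimic the proof of Corollary~1.3 directly, but an \emph{arbitrary} abelian subgroup need not be normal, so the coset-block structure of $M_G(x_g)$ is genuinely more complicated and the naive specialization does not immediately produce a group matrix of $H$. The fix I would pursue is to avoid quotients altogether and argue at the level of the regular representation: decompose $\mathrm{Res}^G_H L$ into $H$-irreducibles, note each appears a multiple of one time so that $M_G(x_g)$ for $H$-supported variables block-diagonalizes into copies of the corresponding pieces of $M_H$, and read off that $\Theta_G$ restricted to such variables is a power of $\Theta_H$ — but to get all of $S(G)$, not just this restricted locus, one must handle general $x_g$, and there the argument instead factors through Theorem~1.1 applied with $G$ abelian replaced by the observation that it suffices to find, for each integer point of $\Theta_G$, a matching integer point of $\Theta_H$. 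I would therefore present the right-hand inclusion as a consequence of a direct linear-algebra identity expressing $\Theta_G(x_g)$ as $\Theta_H$ evaluated at explicit integer linear combinations of the $x_g$, checking integrality exactly as Theorem~1.2 guarantees it in the abelian-quotient case, and isolating the non-normal case as the step requiring the most care.
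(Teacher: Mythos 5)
Your argument for the left-hand inclusion is correct and is essentially the standard one (set $x_g=0$ for $g\notin H$; then $\mathbb{C}[G]$ decomposes as $[G:H]$ copies of the regular $\mathbb{C}[H]$-module, so $\Theta_G$ restricts to $\Theta_H^{[G:H]}$), and you are right that no hypothesis on $H$ is needed there.

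The right-hand inclusion, however, has a genuine gap, and it is exactly at the point you flag as needing the most care. Your fallback --- that the proof of Corollary~1.3 ``should go through verbatim'' because only the abelianness of $H$ is used --- is false: Theorems~1.1 and 1.2 rest on Dedekind's theorem and the character theory of the \emph{ambient} abelian group $G$ (one factors $\Theta_G$ over $\widehat{G}$ and uses the quotient $G/H$), so none of that machinery is available when $G$ is non-abelian. Your other suggestion (restricting $\mathrm{Res}^G_H L$, or specializing variables to be constant on cosets) only handles a restricted locus of variables and so cannot give all of $S(G)$. The missing idea is a purely matrix-theoretic one: order $G$ by left cosets $t_kH$ and observe that each coset block $M_{kl}=\bigl(x_{(t_kh_i)(t_lh_j)^{-1}}\bigr)_{i,j}$ depends on $(i,j)$ only through $h_ih_j^{-1}$, i.e.\ each $M_{kl}$ is itself a group matrix of $H$ in relabeled variables. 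Since group matrices of an \emph{abelian} group pairwise commute, one may apply the determinant identity for block matrices with pairwise commuting blocks,
\begin{align*}
\det M=\det\Bigl(\sum_{\sigma\in S_n}\sgn(\sigma)\,M_{1\sigma(1)}M_{2\sigma(2)}\cdots M_{n\sigma(n)}\Bigr),
\end{align*}
and note that the inner sum is again a group matrix of $H$ whose entries are integer homogeneous polynomials of degree $[G:H]$ in the $x_g$ (not integer \emph{linear} combinations, as you write). This yields $\Theta_G(x_g)=\Theta_H(A_h)$ with $A_h\in\mathbb{Z}[x_g]$, hence $S(G)\subset S(H)$; also, no ``square factor coming from the index'' appears. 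Without the commuting-block determinant identity (or an equivalent device), your outline does not close.
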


For some types of groups, 
the problem was solved in \cite{MR3879399, MR2914452, MR624127, MR601702, MR550657, MR4363104, MR3998922, MR4056860, Integer16, https://doi.org/10.48550/arxiv.2209.12446}.  
As a result, 
for every group $G$ of order at most $15$, 
$S(G)$ is determined (see \cite{MR4363104, MR4056860}). 
For the groups of order $16$, 
the complete descriptions of $S(G)$ were obtained for ${\rm D}_{16}$ \cite[Theorem~5.3]{MR3879399}, 
${\rm C}_{16}$ \cite{Integer16} and ${\rm C}_{2}^{4}$ \cite{https://doi.org/10.48550/arxiv.2209.12446}, 
where ${\rm D}_{n}$ denotes the dihedral group of order $n$. 

Laquer \cite{MR624127} determined $S({\rm C}_{2 p})$, where $p$ is an odd prime, 
by using Laquer's theorem which provides an expression for the integer circulant determinant of ${\rm C}_{2 p}$ as a product of two integer circulant determinants of ${\rm C}_{p}$. 
In \cite{https://doi.org/10.48550/arxiv.2209.12446}, 
$S({\rm C}_{2}^{4})$ is determined by using (2) which provides an expression for the integer group determinant of ${\rm C}_{2}^{n}$ as a product of two integer group determinants of ${\rm C}_{2}^{n - 1}$. 
We can generalize these approaches by using Theorem~$\ref{thm:1.1}$ to determine $S(G)$ for any abelian groups. 
There are fourteen groups of order $16$ up to isomorphism \cite{MR1510814, MR1505615}, 
and five of them are abelian. 
The unsolved abelian groups of order $16$ are ${\rm C}_{8} \times {\rm C}_{2}$, ${\rm C}_{4}^{2}$ and ${\rm C}_{4} \times {\rm C}_{2}^{2}$. 
By applying Theorem~$\ref{thm:1.1}$, 
we determine $S({\rm C}_{8} \times {\rm C}_{2})$. 

\begin{thm}\label{thm:1.5}
Let $A := \left\{ (8 k - 3) (8 l - 3) \mid k, l \in \mathbb{Z}, \: k \equiv l \pmod{2} \right\} \subsetneq \left\{ 16 m - 7 \mid m \in \mathbb{Z} \right\}$. 
Then we have 
\begin{align*}
S \left( {\rm C}_{8} \times {\rm C}_{2} \right) &= \left\{ 16 m + 1, \: m', \: 2^{10}(2 m + 1), \: 2^{12} m \mid m \in \mathbb{Z}, \: m' \in A \right\} \\ 
&\quad \cup \left\{ 2^{11} p (2 m + 1) \mid p = a^{2} + b^{2} \equiv 1, \: a + b \equiv \pm 3 \: \: ({\rm mod} \: {8}), \: m \in \mathbb{Z} \right\} \\ 
&\quad \cup \left\{ 2^{11} p (2 m + 1) \mid p \equiv - 3 \: \: ({\rm mod} \: {8}), \: m \in \mathbb{Z} \right\} \\ 
&\quad \cup \left\{ 2^{11} p^{2} (2 m + 1) \mid p \equiv 3 \: \: ({\rm mod} \: {8}), \: m \in \mathbb{Z} \right\}, 
\end{align*}
where $p$ denotes a prime. 
\end{thm}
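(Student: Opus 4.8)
The plan is to peel off the ${\rm C}_{2}$ factor with Theorem~\ref{thm:1.1} (equivalently the special case~(2)). Writing $G = {\rm C}_{8} \times {\rm C}_{2}$ and taking $H = \{\overline{0}\} \times {\rm C}_{2}$ gives $G / H \cong {\rm C}_{8}$ and $|X| = |H| = 2$, so
\begin{align*}
\Theta_{G}(x_{g}) = \Theta_{{\rm C}_{8}}(a)\, \Theta_{{\rm C}_{8}}(b), \qquad a_{j} = x_{(\overline{j}, \overline{0})} + x_{(\overline{j}, \overline{1})}, \quad b_{j} = x_{(\overline{j}, \overline{0})} - x_{(\overline{j}, \overline{1})} \quad (0 \le j \le 7).
\end{align*}
As $x_{(\overline{j}, \overline{0})}$ and $x_{(\overline{j}, \overline{1})}$ run over $\mathbb{Z}$, the pair $(a_{j}, b_{j})$ runs over all of $\mathbb{Z}^{2}$ subject only to $a_{j} \equiv b_{j} \ ({\rm mod}\ 2)$. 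Hence
\begin{align*}
S({\rm C}_{8} \times {\rm C}_{2}) = \left\{ \Theta_{{\rm C}_{8}}(a)\, \Theta_{{\rm C}_{8}}(b) \mid a, b \in \mathbb{Z}^{8},\ a_{j} \equiv b_{j} \ ({\rm mod}\ 2) \right\},
\end{align*}
and the whole problem becomes an analysis of this parity-constrained product set, exactly as $S({\rm C}_{2}^{4})$ was treated via products of two ${\rm C}_{2}^{3}$-determinants.

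The second ingredient is a sufficiently fine description of $\Theta_{{\rm C}_{8}}$, via its irreducible factorization over $\mathbb{Z}$,
\begin{align*}
\Theta_{{\rm C}_{8}}(a) = L(a)\, M(a)\, N_{4}(a)\, N_{8}(a),
\end{align*}
where $L(a) = \sum_{j} a_{j}$, $M(a) = \sum_{j} (-1)^{j} a_{j}$, $N_{4}(a) = (a_{0} - a_{2} + a_{4} - a_{6})^{2} + (a_{1} - a_{3} + a_{5} - a_{7})^{2}$ is the norm form of $\mathbb{Z}[i]$, and $N_{8}(a)$ is the norm form of $\mathbb{Z}[\zeta_{8}]$. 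I would record: $L$, $M$, $N_{4}$, $N_{8}$ are all congruent mod $2$ to the number of odd entries of $a$, so $\Theta_{{\rm C}_{8}}(a)$ is odd iff $\sum_{j} a_{j}$ is odd; a case analysis over the parity pattern of $a$ gives $v_{2}(\Theta_{{\rm C}_{8}}(a)) \in \{0\} \cup \{ t \mid t \ge 5 \}$, with $v_{2} = 5$ attained and the possible odd parts at $v_{2} \in \{5,6\}$ controlled by $N_{4}$ and $N_{8}$; and when $\sum_{j} a_{j}$ is odd, the class mod $16$ and the prime factorization are governed by $LM \ ({\rm mod}\ 16)$, by the integers represented by $N_{4}$ (odd sums of two squares), and by the odd integers represented by $N_{8}$ — whose rational prime factors are exactly $2$, the primes $p \equiv 1 \ ({\rm mod}\ 8)$ (split further by whether a representation $p = c^{2} + d^{2}$ has $c + d \equiv \pm 3$ or $c + d \equiv \pm 1 \ ({\rm mod}\ 8)$), and even powers of primes $p \equiv \pm 3 \ ({\rm mod}\ 8)$. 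These are precisely the congruence classes in the statement.

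Then I would compute $S({\rm C}_{8} \times {\rm C}_{2})$ stratum by stratum, splitting on the common parity pattern of $a$ and $b$. If the pattern has an odd number of odd entries, both $\Theta_{{\rm C}_{8}}(a)$ and $\Theta_{{\rm C}_{8}}(b)$ are odd, and one shows the product lies in $\{ 16m + 1 \mid m \in \mathbb{Z} \} \cup A$ and that every element of this union is attained ($A$ arising exactly when both factors are $\equiv 5 \ ({\rm mod}\ 8)$ with equal residue mod $16$; the strict inclusion $A \subsetneq \{ 16m - 7 \}$ is checked directly, e.g.\ $41 \notin A$). If the pattern has an even number of odd entries, both factors are even with $v_{2} \ge 5$, so the constraint forces $v_{2}(\Theta_{G}) = v_{2}(\Theta_{{\rm C}_{8}}(a)) + v_{2}(\Theta_{{\rm C}_{8}}(b)) \ge 10$; then one determines the odd parts: at $v_{2} = 10$ every odd integer occurs (giving $2^{10}(2m+1)$), at $v_{2} \ge 12$ every integer occurs (giving $2^{12} m$), and at $v_{2} = 11$ — which forces $\{ v_{2}(\Theta_{{\rm C}_{8}}(a)), v_{2}(\Theta_{{\rm C}_{8}}(b)) \} = \{5, 6\}$ — the odd part must be divisible by a prime $p$ of one of the two listed shapes, or by $p^{2}$ with $p \equiv 3 \ ({\rm mod}\ 8)$, and every such value is realized. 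In each stratum, "$\subseteq$" comes from the congruence, valuation and norm-form restrictions above, and "$\supseteq$" from exhibiting explicit $(a, b)$.

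The main obstacle is the $v_{2} = 11$ stratum, with the $A$-versus-$\{16m-7\}$ distinction a close second. In both cases one needs an \emph{exact} list of the odd numbers arising as (odd parts of) products $\Theta_{{\rm C}_{8}}(a)\Theta_{{\rm C}_{8}}(b)$ for $a, b$ sharing a parity pattern, and the odd part of a single $\Theta_{{\rm C}_{8}}$ at valuation $5$ or $6$ (and likewise its class mod $16$) is rigid: it is a product of values of the norm forms $N_{4}$, $N_{8}$ and of the nearly-unit factors $L$, $M$, divided by a fixed power of $2$. So one must pin down precisely which integers $N_{4}$ and $N_{8}$ represent and when $L$, $M$, $N_{4}$, $N_{8}$ can simultaneously be units or prime powers, then show that the allowable products are exactly the listed sets. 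Establishing both the achievability of every listed value and the non-achievability of everything else is where the bulk of the work lies; the remainder is bookkeeping built on the clean reduction provided by Theorem~\ref{thm:1.1}.
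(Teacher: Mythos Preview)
Your plan is correct and is essentially the paper's own approach. The paper also peels off ${\rm C}_{2}$ via Theorem~\ref{thm:1.1} to get a parity-constrained product of two ${\rm C}_{8}$-determinants, then decomposes each $\Theta_{{\rm C}_{8}}$ further (writing $D_{8}=D_{4}\cdot\widetilde{D}_{4}$ and $D_{4}=\alpha_{0}\alpha_{1}$, which amounts to your $L\cdot M\cdot N_{4}\cdot N_{8}$ with $\alpha_{0}=LM$, $\alpha_{1}=N_{4}$, $\widetilde{D}_{4}=N_{8}$), and carries out exactly the stratified analysis you describe: the odd stratum yields $\{16m+1\}\cup A$ by mod~$16$ computations (Lemmas~\ref{lem:5.3}--\ref{lem:5.4}), the even stratum forces $v_{2}\geq 10$ (Lemma~\ref{lem:6.1}), the $v_{2}=11$ case is the hard exclusion lemma (Lemma~\ref{lem:6.2}) proved via the norm-form constraints on $N_{4}$ and $N_{8}$ together with a mod~$4$/mod~$16$ contradiction (Lemma~\ref{lem:6.5} and the cited \cite[Lemma~4.8]{Integer16}), and achievability is by explicit tuples (Lemmas~\ref{lem:7.1}--\ref{lem:7.2}).
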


The remaining two abelian groups could also be solved by using Theorem~$\ref{thm:1.1}$. (While this paper under review, it have been solved in \cite{https://doi.org/10.48550/arxiv.2211.14761, arXiv:2211.01597v2}. 
Also, as for non-abelian groups of order 16, ${\rm D}_{8} \times {\rm C}_{2}$, ${\rm Q}_{8} \times {\rm C}_{2}$ \cite[Theorems~3.1 and 4.1]{https://doi.org/10.48550/arxiv.2211.09930}, ${\rm Q}_{16}$ \cite{https://doi.org/10.48550/arxiv.2302.11688} and ${\rm C}_{2}^{2} \rtimes {\rm C}_{4}$ \cite{arXiv:2303.08489v2} have been solved, where ${\rm Q}_{n}$ denotes the generalized quaternion group of order $n$.)

Pinner and Smyth \cite[p.427]{MR4056860} noted the following inclusion relations for all groups of order~$8$: 
$$
S({\rm C}_{2}^{3}) \subsetneq S({\rm C}_{4} \times {\rm C}_{2}) \subsetneq S({\rm Q}_{8}) \subsetneq S({\rm D}_{8}) \subsetneq S({\rm C}_{8}). 
$$
 From preceding results and Theorem~$\ref{thm:1.5}$, 
 we have 
 $$
 S({\rm C}_{2}^{4}) \subsetneq S({\rm C}_{8} \times {\rm C}_{2}) \subsetneq S({\rm D}_{16}) \subsetneq S({\rm C}_{16}). 
 $$

Determining the integer group determinants aims to investigate the structure of a group by means of the group determinant. 
It is expected that individual new results will help us understand more about groups.

This paper is organized as follows. 
In Section~$\ref{sec:2}$, we prove Theorems~$\ref{thm:1.1}$ and $\ref{thm:1.2}$. 
In Sections~$\ref{sec:3}$ and $\ref{sec:4}$, we prove Theorems~$\ref{thm:1.4}$ and $\ref{thm:1.5}$, respectively.

\section{Proofs of Theorems~$\ref{thm:1.1}$ and $\ref{thm:1.2}$}\label{sec:2}
For a finite group $G$, 
let $x_{g}$ be an indeterminate for each $g \in G$, 
let $\mathbb{C}[x_{g}]$ be the multivariate polynomial ring in $x_{g}$ over $\mathbb{C}$, 
let $\mathbb{C} G$ the group algebra of $G$ over $\mathbb{C}$, 
and let $\mathbb{C}[x_{g}] G := \mathbb{C}[x_{g}] \otimes \mathbb{C} G = \left\{ \sum_{g \in G} A_{g} g \mid A_{g} \in \mathbb{C}[x_{g}] \right\}$ be the group algebra of $G$ over $\mathbb{C}[x_{g}]$. 
Also, 
for a finite abelian group $G$ and a subgroup $H$ of $G$, let 
$$
\widehat{G}_{H} := \left\{ \chi \in \widehat{G} \: \vert \: \chi(h) = 1, \: h \in H \right\}. 
$$
It is easily verified that 
$\widehat{G}_{H} = \left\{ \varphi \circ \pi \: \vert \: \varphi \in \widehat{G/H} \right\}$, 
where $\pi : G \rightarrow G/H$ is the canonical homomorphism. 
To prove Theorem~$\ref{thm:1.1}$, 
we use the following lemma. 

\begin{lem}[{\cite[Lemma~3.6]{MR3622295}}]\label{lem:2.1}
Let $G$ be a finite abelian group and $H$ be a subgroup of $G$. 
For every $h \in H$, there exists a homogeneous polynomial $A_{h} \in \mathbb{C}[x_{g}]$ satisfying $\deg{A_{h}} = \left| G / H \right|$ and 
$$
\prod_{\chi \in \widehat{G}_{H}} \sum_{g \in G} \chi(g) x_{g} g = \sum_{h \in H} A_{h} h \in \mathbb{C}[x_{g}]H. 
$$
If $H = G$, then we can take $A_{h} = x_{h}$ for each $h \in H$. 
\end{lem}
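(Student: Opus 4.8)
The plan is to prove the lemma inside the group algebra $R := \mathbb{C}[x_g]G$ of $G$ over $\mathbb{C}[x_g]$, using the grading it carries over the quotient $G/H$. Regarding each $s\in G/H$ as a coset of $H$ in $G$, put $R_s := \bigoplus_{g\in s}\mathbb{C}[x_g]\,g$, so that $R = \bigoplus_{s\in G/H}R_s$. Since $G$ is abelian, the product of two cosets is a coset and $R_sR_t\subseteq R_{st}$, so this is a grading of the $\mathbb{C}[x_g]$-algebra $R$ by the group $G/H$, whose identity component (the summand indexed by the trivial coset $H$) is exactly $\mathbb{C}[x_g]H$. Hence the whole statement reduces to showing that $\Pi := \prod_{\chi\in\widehat{G}_H}\sum_{g\in G}\chi(g)x_g\,g$ lies in that identity component, together with an easy bookkeeping of degrees.

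To detect the identity component I would use the automorphisms attached to the grading. For each $\rho\in\widehat{G/H}$, let $\tilde\rho\colon R\to R$ be the $\mathbb{C}[x_g]$-linear map acting on $R_s$ by the scalar $\rho(s)$; on homogeneous elements one checks at once that $\tilde\rho(u_sv_t)=\rho(st)u_sv_t=\tilde\rho(u_s)\tilde\rho(v_t)$, so $\tilde\rho$ is an algebra automorphism, and because the characters of the finite abelian group $G/H$ separate its points, an element of $R$ is fixed by every $\tilde\rho$ if and only if it lies in the identity component $\mathbb{C}[x_g]H$. Next, for $\chi\in\widehat{G}_H$ write $\chi=\varphi\circ\pi$ with $\varphi\in\widehat{G/H}$ (using $\widehat{G}_H=\{\varphi\circ\pi\mid\varphi\in\widehat{G/H}\}$) and group the sum defining $\alpha_\chi:=\sum_{g\in G}\chi(g)x_g\,g$ by cosets: since $\chi(g)=\varphi(\pi(g))$ this gives $\alpha_\chi=\sum_{s\in G/H}\varphi(s)\,\gamma_s$, where $\gamma_s:=\sum_{g\in s}x_g\,g\in R_s$ does not depend on $\chi$. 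Consequently $\tilde\rho(\alpha_\chi)=\sum_{s}\varphi(s)\rho(s)\,\gamma_s=\alpha_{\chi\cdot(\rho\circ\pi)}$.

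The key point is then immediate: $\rho\circ\pi\in\widehat{G}_H$ and multiplication by it is a bijection of the group $\widehat{G}_H$, so, applying the ring homomorphism $\tilde\rho$ factorwise and reindexing, $\tilde\rho(\Pi)=\prod_{\chi\in\widehat{G}_H}\alpha_{\chi\cdot(\rho\circ\pi)}=\Pi$ for every $\rho\in\widehat{G/H}$; hence $\Pi\in\mathbb{C}[x_g]H$, and we may write $\Pi=\sum_{h\in H}A_h\,h$ with $A_h\in\mathbb{C}[x_g]$. For the degree statement, $|\widehat{G}_H|=|\widehat{G/H}|=|G/H|$, and $\Pi$ is a product of $|G/H|$ factors each of whose coefficients is homogeneous of degree $1$ in the $x_g$, so every $A_h$ is homogeneous of degree $|G/H|$ (and in fact nonzero, as one sees by specializing $x_g=0$ for $g\notin H$, which turns $\Pi$ into $\bigl(\sum_{h\in H}x_h\,h\bigr)^{|G/H|}$). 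Finally, if $H=G$ then $G/H$ is trivial, $\widehat{G}_H$ consists only of the trivial character, and $\Pi=\sum_{g\in G}x_g\,g=\sum_{h\in H}x_h\,h$, so $A_h=x_h$. I expect the only real content to be the second paragraph: introducing the $\tilde\rho$, identifying their common fixed subalgebra with the identity component, and spotting that $\tilde\rho$ merely permutes the factors $\alpha_\chi$; once that is in hand everything else is formal.
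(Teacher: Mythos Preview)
Your argument is correct. The grading of $\mathbb{C}[x_g]G$ by $G/H$, together with the automorphisms $\tilde\rho$ coming from $\widehat{G/H}$, cleanly forces the product $\Pi$ into the identity component $\mathbb{C}[x_g]H$; the check that $\tilde\rho$ merely permutes the factors $\alpha_\chi$ via $\chi\mapsto\chi\cdot(\rho\circ\pi)$ is exactly the right observation, and your verification of nonvanishing by specializing $x_g=0$ for $g\notin H$ is a nice touch that the bare statement of the lemma leaves implicit.

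As for comparison: the paper does not prove this lemma at all---it is quoted verbatim from \cite[Lemma~3.6]{MR3622295} and used as a black box in the proofs of Theorems~1.1 and~1.2. So there is no ``paper's own proof'' to compare against here. Your approach is in fact close in spirit to how the paper \emph{uses} the lemma: the maps $F_\chi$ appearing in the proof of Theorem~1.1 (defined by $F_\chi(g)=\chi(g)g$) are exactly your $\tilde\rho$ when $\chi=\rho\circ\pi$, so you have effectively reconstructed the ambient machinery.
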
 

\begin{proof}[Proof of Theorem~$\ref{thm:1.1}$]
From Dedekind's theorem, we have 
\begin{align*}
\Theta_{G}(x_{g}) 
&= \prod_{\chi \in \widehat{G}} \sum_{g \in G} \chi(g) x_{g} \\ 
&= \prod_{\chi \in X} \prod_{\chi' \in \widehat{G}_{H}} \sum_{t \in T} \sum_{h \in H} \left( \chi \chi' \right)(t h) x_{t h} \\ 
&= \prod_{\chi \in X} \prod_{\chi' \in \widehat{G}_{H}} \sum_{t \in T} \chi'(t) \sum_{h \in H} \chi(t h) x_{t h} \\ 
&= \prod_{\chi \in X} \prod_{\chi' \in \widehat{G/H}} \sum_{t H \in G/H} \chi'(t H) y_{t H}^{\chi} \\ 
&= \prod_{\chi \in X} \Theta_{G/H}{\left( y_{t H}^{\chi} \right)}. 
\end{align*}
Next, we show that for any $\chi \in X$, 
there exists $A_{h} \in \mathbb{C}[x_{g}]$ satisfying 
$$
\Theta_{G/H}{\left( y_{t H}^{\chi} \right)} = \sum_{h \in H} \chi(h) A_{h}. 
$$
For any $\chi \in X$, 
let $F_{\chi} \colon \mathbb{C}[x_{g}]G \to \mathbb{C}[x_{g}]G$ be the $\mathbb{C}[x_{g}]$-algebra homomorphism defined by $F_{\chi}(g) = \chi(g) g$. 
Then, from Lemma~$\ref{lem:2.1}$, 
there exists $A_{h} \in \mathbb{C}[x_{g}]$ satisfying 
\begin{align*}
\sum_{h \in H} \chi(h) A_{h} h 
&= F_{\chi} \left( \sum_{h \in H} A_{h} h \right) \\ 
&= F_{\chi} \left( \prod_{\chi' \in \widehat{G}_{H}} \sum_{g \in G} \chi'(g) x_{g} g \right) \\ 
&= F_{\chi} \left( \prod_{\chi' \in \widehat{G}_{H}} \sum_{t \in T} \sum_{h \in H} \chi'(t) x_{t h} t h \right) \\ 
&= \prod_{\chi' \in \widehat{G}_{H}} \sum_{t \in T} \sum_{h \in H}\chi'(t) \chi(t h)  x_{t h} t h. 
\end{align*}
Let $F \colon \mathbb{C}[x_{g}]G \to \mathbb{C}[x_{g}]$ be the $\mathbb{C}[x_{g}]$-algebra homomorphism defined by $F(g) = 1$. 
Applying $F$ to the both sides of the above, 
we have 
$$
\sum_{h \in H} \chi(h) A_{h} = \prod_{\chi' \in \widehat{G}_{H}} \sum_{t \in T} \sum_{h \in H} \chi'(t) \chi(t h) x_{t h} = \prod_{\chi' \in \widehat{G/H}} \sum_{t H \in G/H} \chi'(t H) y_{t H}^{\chi} = \Theta_{G/H}{\left( y_{t H}^{\chi} \right)}. 
$$
From the above, it follows that there exists $A_{h} \in \mathbb{C}[x_{g}]$ satisfying 
\begin{align}
\Theta_{G}(x_{g}) = \prod_{\chi \in X} \Theta_{G/H}{\left( y_{t H}^{\chi} \right)} = \prod_{\chi \in X} \sum_{h \in H} \chi(h) A_{h} = \prod_{\chi \in \widehat{H}} \sum_{h \in H} \chi(h) A_{h} = \Theta_{H}(A_{h}). 
\end{align}
Finally, we show that $A_{h}$ in (4) is expressed as 
$$
A_{h} = \frac{1}{| H |}\sum_{\chi \in X} \chi(h^{-1}) \Theta_{G/H}{\left( y_{t H}^{\chi} \right)} 
$$
for any $h \in H$. 
From orthogonality relations for characters, 
for any $h \in H$, we have 
\begin{align*}
\sum_{\chi \in X} \chi(h^{-1}) \Theta_{G/H}{\left( y_{t H}^{\chi} \right)} 
&= \sum_{\chi \in X} \chi(h^{-1}) \sum_{h' \in H} \chi(h') A_{h'} \\ 
&= \sum_{\chi \in X} \sum_{h' \in H} \chi(h^{-1} h') A_{h'} \\ 
&= \sum_{h' \in H} \sum_{\chi \in \widehat{H}} \chi(h^{-1} h') A_{h'} \\ 
&= | H | A_{h}. 
\end{align*}
\end{proof}

\begin{rem}\label{rem:2.2}
From the proof of Theorem~$\ref{thm:1.1}$, 
$A_{h}$ in Lemma~$\ref{lem:2.1}$ equals to $z_{h}$ in Theorem~$\ref{thm:1.1}$. 
\end{rem}


\begin{proof}[Proof of Theorem~$\ref{thm:1.2}$]
From Lemma~$\ref{lem:2.1}$ and Remark~$\ref{rem:2.2}$, 
we have 
\begin{align*}
\sum_{h \in H} z_{h} h = \prod_{\chi \in \widehat{G}_{H}} \sum_{g \in G} \chi(g) x_{g} g = \prod_{\chi \in \widehat{G / H}} \sum_{g \in G} \chi(g H) x_{g} g = \prod_{\chi \in \widehat{G / H}} \sum_{t H \in G / H} \sum_{h' \in H} \chi(t H) x_{t h'} t h'. 
\end{align*}
Therefore, we have 
\begin{align*}
z_{h} = \left( \prod_{\chi \in \widehat{G / H}} \sum_{t H \in G / H} \chi(t H) \sum_{h' \in H} x_{t h'} \right)_{h} = \Theta_{G / H}{\left( y_{t H} \right)}_{h} \in \mathbb{Z}[x_{g}]. 
\end{align*}
\end{proof}

From Theorems~$\ref{thm:1.1}$ and $\ref{thm:1.2}$, 
we can take $z_{h} \in \mathbb{Z}[x_{g}]$ satisfying $\Theta_{G}(x_{g}) = \Theta_{H}(z_{h})$. 
Thus, Corollary~$\ref{cor:1.3}$ is obtained.

\begin{rei}\label{rei:2.3}
Using Theorems~$\ref{thm:1.1}$ and $\ref{thm:1.2}$, 
we calculate $\Theta_{{\rm C}_{4}}(x_{g})$. 
Let $G = {\rm C}_{4}$ and $H = \{ \overline{0}, \overline{2} \}$. 
Then, $G / H = \left\{ \overline{0} H, \overline{1} H \right\}$. 
We write $x_{\overline{i}}$ as $x_{i}$ for any $0 \leq i \leq 3$. 
From Theorem~$\ref{thm:1.2}$, 
we have 
\begin{align*}
z_{\overline{0}} = \Theta_{G/H}(y_{t H})_{\overline{0}} = x_{0}^{2} + x_{2}^{2} - 2 x_{1} x_{3}, \quad 
z_{\overline{2}} = \Theta_{G/H}(y_{t H})_{\overline{2}} = 2 x_{0} x_{2} - x_{1}^{2} - x_{3}^{2} 
\end{align*}
since $y_{\overline{0} H} = x_{0} + x_{2}$, $y_{\overline{1} H} = x_{1} + x_{3}$ and 
\begin{align*}
\Theta_{G/H}(y_{t H}) = y_{\overline{0} H}^{2} - y_{\overline{1} H}^{2} = \left( x_{0}^{2} + 2 x_{0} x_{2} + x_{2}^{2} \right) - \left( x_{1}^{2} + 2 x_{1} x_{3} + x_{3}^{2} \right). 
\end{align*}
Therefore, from Theorem~$\ref{thm:1.1}$, 
we have 
\begin{align*}
\Theta_{G}(x_{g}) = \Theta_{H}(z_{h}) = z_{\overline{0}}^{2} - z_{\overline{2}}^{2} = \left( x_{0}^{2} + x_{2}^{2} - 2 x_{1} x_{3} \right)^{2} - \left( 2 x_{0} x_{2} - x_{1}^{2} - x_{3}^{2} \right)^{2}. 
\end{align*}
\end{rei}

\section{Proof of Theorem~$\ref{thm:1.4}$}\label{sec:3}

The lower bound in Theorem~$\ref{thm:1.4}$ is derived from \cite[Lemma~3.2]{MR4227663}. 
Also, the upper bound immediately follows from the following lemma essentially provided in \cite[Theorem~1.4]{MR4031482}. 

\begin{lem}\label{lem:3.1}
Let $G$ be a finite group and let $H$ be an abelian subgroup of $G$. 
Then, there exists a homogeneous polynomial $A_{h} \in \mathbb{Z}[x_{g}]$ satisfying $\deg{A_{h}} = \left[ G : H \right]$ and 
$$
\Theta_{G}(x_{g}) = \Theta_{H}(A_{h}), 
$$
where $[G : H]$ is the index of $H$ in $G$. 
\end{lem}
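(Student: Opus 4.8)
The plan is to reduce the non-abelian statement to the abelian Dedekind factorization by inducing the regular representation of $H$ up to $G$. Concretely, I would start from the standard identification $M_{G}(x_{g}) = \sum_{g \in G} x_{g} L_{G}(g)$, where $L_{G}$ is the (left) regular representation of $G$ on $\mathbb{C}G$. Choose a transversal $g_{1}, \dots, g_{m}$ for the left cosets of $H$ in $G$, where $m = [G:H]$, so that $\mathbb{C}G = \bigoplus_{i=1}^{m} g_{i} \mathbb{C}H$ as a right $\mathbb{C}H$-module. This exhibits $L_{G}$, restricted along this decomposition, as an $m \times m$ block matrix whose entries are elements of the group algebra $\mathbb{C}[x_{g}]H$ acting by right multiplication: for each pair $(i,j)$ there is a unique $h_{ij} \in H$ (depending on $g$) with $g\, g_{j} = g_{i} h_{ij}$, and collecting over $g$ one obtains a matrix $N \in \Mat_{m}(\mathbb{C}[x_{g}]H)$ with $\Theta_{G}(x_{g}) = \det\bigl( (\text{reg.\ rep.\ of } H)(N) \bigr)$.

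The key step is then to compute this determinant by passing to the abelian structure of $H$. Since $H$ is abelian, $\mathbb{C}H \cong \prod_{\chi \in \widehat{H}} \mathbb{C}$ via $\chi$, and under this isomorphism the regular representation of $H$ block-diagonalizes, so applying $\chi$ entrywise to $N$ gives $\Theta_{G}(x_{g}) = \prod_{\chi \in \widehat{H}} \det\bigl( \chi(N) \bigr)$, where $\chi(N) \in \Mat_{m}(\mathbb{C}[x_{g}])$ is the matrix obtained by replacing each group-algebra entry $\sum_{h} c_{h} h$ by $\sum_{h} c_{h} \chi(h)$. Setting $A_{h} := \frac{1}{|H|} \sum_{\chi \in \widehat{H}} \chi(h^{-1}) \det\bigl( \chi(N) \bigr)$, the same orthogonality-of-characters computation used in the proof of Theorem~\ref{thm:1.1} shows $\det(\chi(N)) = \sum_{h \in H} \chi(h) A_{h}$ for every $\chi$, whence $\Theta_{G}(x_{g}) = \prod_{\chi \in \widehat{H}} \sum_{h \in H} \chi(h) A_{h} = \Theta_{H}(A_{h})$ by Dedekind's theorem. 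Each $\det(\chi(N))$ is visibly homogeneous of degree $m = [G:H]$ in the $x_{g}$ (the matrix $N$ is linear in the $x_{g}$ and is $m \times m$), so each $A_{h}$ is homogeneous of degree $[G:H]$.

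The remaining point is that $A_{h} \in \mathbb{Z}[x_{g}]$, not merely $\mathbb{C}[x_{g}]$. For this I would argue as in the proof of Theorem~\ref{thm:1.2}: rather than extract $A_{h}$ via the character sum, observe that $\sum_{h \in H} A_{h} h = \det$-expansion data assembled inside $\mathbb{C}[x_{g}]H$ can be rewritten so that $A_{h}$ is the coefficient of $h$ in an explicit element of $\mathbb{Z}[x_{g}]H$ — namely the image of $\det(N)$ computed formally over the commutative ring $\mathbb{Z}[x_{g}]H$ (legitimate because $H$ is abelian, so $\mathbb{Z}[x_{g}]H$ is commutative and determinants behave well), followed by reading off coefficients. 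Alternatively one notes $A_{h} = \Theta_{G}(x_{g})$-type combinations are Galois-stable under $\mathrm{Gal}(\mathbb{Q}(\zeta_{|H|})/\mathbb{Q})$ permuting the $\chi$, hence rational, and then integral by a clearing-denominators / primitivity argument since $\Theta_{G}$ has integer coefficients. The main obstacle is precisely this integrality bookkeeping — ensuring the factor $A_{h}$ picked out really lies in $\mathbb{Z}[x_{g}]$ — and the cleanest route is the group-algebra coefficient-extraction argument, exactly parallel to how Theorem~\ref{thm:1.2} upgrades Theorem~\ref{thm:1.1}; everything else is the routine block-matrix and character-orthogonality computation already rehearsed above.
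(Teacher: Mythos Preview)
Your argument is correct, but the paper takes a shorter and more elementary route that sidesteps characters entirely. The paper orders $G$ by cosets, writes $M_{G}(x_{g})$ as an $n\times n$ block matrix $(M_{kl})$ with $n=[G:H]$, observes that each block $M_{kl}=\bigl(x_{(t_{k}h_{i})(t_{l}h_{j})^{-1}}\bigr)_{i,j}$ is itself an $H$-group matrix (in renamed variables), and hence the blocks pairwise commute because $H$ is abelian. It then invokes the classical identity that for a block matrix with pairwise commuting blocks, $\det M=\det\bigl(\sum_{\sigma\in S_{n}}\sgn(\sigma)M_{1\sigma(1)}\cdots M_{n\sigma(n)}\bigr)$; the inner sum is again an $H$-group matrix with entries in $\mathbb{Z}[x_{g}]$, so $\Theta_{G}=\Theta_{H}(A_{h})$ with $A_{h}\in\mathbb{Z}[x_{g}]$ falling out immediately, and homogeneity of degree $n$ is clear.

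Your approach is the representation-theoretic reformulation of exactly this computation: your matrix $N\in\Mat_{m}(\mathbb{C}[x_{g}]H)$ is the paper's block matrix viewed as a matrix over the group algebra, and your product $\prod_{\chi}\det(\chi(N))$ is what one gets by diagonalizing the commuting blocks via characters rather than citing the commuting-block determinant identity. The trade-off is that the paper's route gives integrality for free in one line, whereas you have to circle back to compute $\det(N)$ over the commutative ring $\mathbb{Z}[x_{g}]H$ to recover it --- which, once you notice that $\det(\chi(N))=\chi(\det N)$ because $\chi$ is a ring homomorphism, is really the same identity in disguise. Your version has the advantage of making the parallel with Theorems~\ref{thm:1.1} and~\ref{thm:1.2} explicit; the paper's has the advantage of brevity and of never leaving $\mathbb{Z}$.
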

The proof in \cite{MR4031482} is not concise. 
We give a brief proof of Lemma~$\ref{lem:3.1}$.
For the purpose, we use the following identity \cite[p.~82, Theorem~2.6]{johnson2019group}; see also \cite{MR1563590, MR1732497}: 
{\it Given the block matrix $M$ of the form 
$$
\begin{pmatrix}
M_{1 1} & M_{1 2} & \cdots & M_{1 n} \\ 
M_{2 1} & M_{2 2} & \cdots & M_{2 n} \\ 
\vdots & \vdots & \ddots & \vdots \\ 
M_{n 1} & M_{n 2} & \cdots & M_{n n}
\end{pmatrix}, 
$$
where the matrices $M_{i j}$ are pairwise commuting of size $m \times m$ then 
$$
\det{M} = \det{\left( \sum_{\sigma \in S_{n}} \sgn(\sigma) M_{1 \sigma(1)} M_{2 \sigma(2)} \cdots M_{n \sigma(n)} \right)}. 
$$
}

\begin{proof}[Proof of Lemma~$\ref{lem:3.1}$]
Let $G = \{ g_{1}, g_{2}, \ldots, g_{m n} \}$, let $H = \{ h_{1}, h_{2}, \ldots, h_{m} \}$ and let 
$G = t_{1} H \sqcup t_{2} H \sqcup \cdots \sqcup t_{n} H$, 
where $g_{i} = t_{k} h_{l} \in G$ with $i = (k - 1) m + l$ for $1 \leq k \leq n$ and $1 \leq l \leq m$. 
Then, 
the group matrix $\left( x_{g_{i} g_{j}^{- 1}} \right)_{1 \leq i, j \leq m n}$ of $G$ can be expressed as the block matrix: 
$$
\left( x_{g_{i} g_{j}^{- 1}} \right)_{1 \leq i, j \leq m n} = \left( M_{k l} \right)_{1 \leq k, l \leq n}, 
$$
where $M_{k l}$ is the matrix obtained by replacing each $x_{h_{i} h_{j}^{-1}}$ in the group matrix $\left( x_{h_{i} h_{j}^{- 1}} \right)_{1 \leq i, j \leq m}$ of $H$ to $x_{(t_{k} h_{i}) (t_{l} h_{j})^{-1}}$. 
That is, $M_{k l} = \left( x_{(t_{k} h_{i}) (t_{l} h_{j})^{-1}} \right)_{1 \leq i, j \leq m}$. 
Since $H$ is abelian, $M_{k l}$ are pairwise commuting. 
Therefore, there exists $A_{h} \in \mathbb{Z}[x_{g}]$ satisfying 
$$
\Theta_{G}(x_{g}) = \det{\left( x_{g_{i} g_{j}^{-1}} \right)_{1 \leq i, j \leq m n}} = \det{\left( \sum_{\sigma \in S_{n}} \sgn(\sigma) M_{1 \sigma(1)} M_{2 \sigma(2)} \cdots M_{n \sigma(n)} \right)} = \Theta_{H}(A_{h}) 
$$
since $\sum_{\sigma \in S_{n}} \sgn(\sigma) M_{1 \sigma(1)} M_{2 \sigma(2)} \cdots M_{n \sigma(n)}$ is also of the form of a group matrix of $H$. 
\end{proof}

\section{Proof of Theorem~$\ref{thm:1.5}$}\label{sec:4}
In this section, 
by applying Theorem~$\ref{thm:1.1}$, 
we determine $S({\rm C}_{8} \times {\rm C}_{2})$. 

\subsection{Relations with group determinants of subgroups}

We denote the variable $x_{\overline{i}}$ by $x_{i}$ for any $\overline{i} \in {\rm C}_{n}$ and let $D_{n}(x_{0}, x_{1}, \ldots, x_{n - 1}) := \Theta_{{\rm C}_{n}}(x_{g})$. 
Also, for any $g = ( \overline{r}, \overline{s} ) \in {\rm C}_{8} \times {\rm C}_{2}$ with $r \in \{ 0, 1, \ldots, 7 \}$ and $s \in \{ 0, 1 \}$, 
we denote the variable $y_{g}$ by $y_{j}$, where $j := r + 8s$, 
and let $D_{8 \times 2}(y_{0}, y_{1}, \ldots, y_{15}) := \Theta_{{\rm C}_{8} \times {\rm C}_{2}}(y_{g})$. 
From the $G = {\rm C}_{8} \times {\rm C}_{2}$ and $H = \left\{ (\overline{0}, \overline{0}), (\overline{0}, \overline{1}) \right\}$ case of Theorem~$\ref{thm:1.1}$, we have 
\begin{align*}
D_{8 \times 2}(y_{0}, \ldots, y_{15}) = D_{8}(y_{0} + y_{8}, \ldots, y_{7} + y_{15}) D_{8}(y_{0} - y_{8}, \ldots, y_{7} - y_{15}). 
\end{align*}
Let $\zeta_{n}$ be a primitive $n$-th root of unity. 
From the $G = {\rm C}_{8}$ and $H = \left\{ \overline{0}, \overline{4} \right\}$ case of Theorem~$\ref{thm:1.1}$, we have 
\begin{align*}
D_{8}(x_{0}, x_{1}, \ldots, x_{7}) 
&= D_{4}(x_{0} + x_{4}, x_{1} + x_{5}, x_{2} + x_{6}, x_{3} + x_{7}) \\ 
&\quad \times D_{4}(x_{0} - x_{4}, \zeta_{8}(x_{1} - x_{5}), \zeta_{8}^{2}(x_{2} - x_{6}), \zeta_{8}^{3}(x_{3} - x_{7})). 
\end{align*}
From the $G = {\rm C}_{4}$ and $H = \left\{ \overline{0}, \overline{2} \right\}$ case of Theorem~$\ref{thm:1.1}$, we have 
\begin{align*}
D_{4}(x_{0}, x_{1}, x_{2}, x_{3}) 
&= D_{2}(x_{0} + x_{2}, x_{1} + x_{3}) D_{2}(x_{0} - x_{2}, \zeta_{4} (x_{1} - x_{3})) \\ 
&= D_{2}(x_{0}^{2} + x_{2}^{2} - 2 x_{1} x_{3}, - x_{1}^{2} - x_{3}^{2} + 2 x_{0} x_{2}). 
\end{align*}

Let $\widetilde{D}_{4}(x_{0}, x_{1}, x_{2}, x_{3}) := D_{4}(x_{0}, \zeta_{8} x_{1}, \zeta_{8}^{2} x_{2}, \zeta_{8}^{3} x_{3})$. 
Then we have the following lemma. 

\begin{lem}\label{lem:4.1}
The following hold: 
\begin{enumerate}
\item[$(1)$] $D_{4}(x_{0}, x_{1}, x_{2}, x_{3}) = \left\{ (x_{0} + x_{2})^{2} - (x_{1} + x_{3})^{2} \right\} \left\{ (x_{0} - x_{2})^{2} + (x_{1} - x_{3})^{2} \right\}$; 
\item[$(2)$] $\widetilde{D}_{4}(x_{0}, x_{1}, x_{2}, x_{3}) = (x_{0}^{2} - x_{2}^{2} + 2 x_{1} x_{3})^{2} + (x_{1}^{2} - x_{3}^{2} - 2 x_{0} x_{2})^{2}$. 
\end{enumerate}
\end{lem}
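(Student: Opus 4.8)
The plan is to read off both identities from the factorizations of $D_4$ already displayed above, using nothing beyond the evaluation $D_{2}(a,b) = \Theta_{{\rm C}_{2}}(a,b) = a^{2} - b^{2}$ together with the relations $\zeta_{4}^{2} = -1$ and $\zeta_{8}^{4} = -1$ (so that $\zeta_{8}^{2}$ is a primitive fourth root of unity, which I will denote $i$).

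For part (1) I would begin from the $G = {\rm C}_{4}$, $H = \{\overline{0}, \overline{2}\}$ instance of Theorem~\ref{thm:1.1} recorded above,
$$D_{4}(x_{0}, x_{1}, x_{2}, x_{3}) = D_{2}(x_{0} + x_{2},\, x_{1} + x_{3})\, D_{2}(x_{0} - x_{2},\, \zeta_{4}(x_{1} - x_{3})).$$
Evaluating the first factor gives $(x_{0} + x_{2})^{2} - (x_{1} + x_{3})^{2}$, and evaluating the second gives $(x_{0} - x_{2})^{2} - \zeta_{4}^{2}(x_{1} - x_{3})^{2} = (x_{0} - x_{2})^{2} + (x_{1} - x_{3})^{2}$; the product of these two is exactly the asserted expression.

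For part (2) I would use that $\widetilde{D}_{4}(x_{0}, x_{1}, x_{2}, x_{3}) = D_{4}(x_{0}, \zeta_{8} x_{1}, \zeta_{8}^{2} x_{2}, \zeta_{8}^{3} x_{3})$ by definition, and substitute $x_{1} \mapsto \zeta_{8} x_{1}$, $x_{2} \mapsto i x_{2}$, $x_{3} \mapsto \zeta_{8}^{3} x_{3}$ into the formula of part (1). The two bracketed factors then become $(x_{0} + i x_{2})^{2} - i(x_{1} + i x_{3})^{2}$ and $(x_{0} - i x_{2})^{2} + i(x_{1} - i x_{3})^{2}$; a short expansion shows that, with $u := x_{0}^{2} - x_{2}^{2} + 2 x_{1} x_{3}$ and $v := x_{1}^{2} - x_{3}^{2} - 2 x_{0} x_{2}$, these equal $u + i v$ and $u - i v$ (the overall sign of $v$ being irrelevant once squared). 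Their product is therefore $u^{2} + v^{2}$, which is the claimed identity. Equivalently, one may substitute the same scaling into the already-simplified form $D_{4} = D_{2}(x_{0}^{2} + x_{2}^{2} - 2 x_{1} x_{3},\, -x_{1}^{2} - x_{3}^{2} + 2 x_{0} x_{2})$ and reduce the powers of $\zeta_{8}$ using $\zeta_{8}^{4} = -1$; both routes give the same answer.

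The whole argument is routine; the only place calling for care is keeping track of the powers of $\zeta_{8}$ in part (2) — notably that the cross term $x_{1} x_{3}$ picks up $\zeta_{8}^{1}\zeta_{8}^{3} = \zeta_{8}^{4} = -1$ while $x_{0} x_{2}$ picks up $\zeta_{8}^{2}$ — and observing that under this substitution the two factors coming from part (1) are complex conjugates of one another, which both explains the sum-of-two-squares shape and confirms that the result is free of $\zeta_{8}$, as it must be since $D_{4} \in \mathbb{Z}[x_{g}]$.
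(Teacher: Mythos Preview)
Your proof is correct and follows exactly the route the paper intends: the lemma is stated in the paper without a separate proof because it is meant to be read off from the two displayed expressions for $D_{4}$ immediately preceding it, and that is precisely what you do---evaluate $D_{2}(a,b)=a^{2}-b^{2}$ in the product form for part~(1), then substitute $x_{j}\mapsto \zeta_{8}^{j}x_{j}$ and reduce using $\zeta_{8}^{4}=-1$ for part~(2). Your observation that the two factors become complex conjugates $u\pm i v$ is the cleanest way to see the sum-of-squares shape, and matches the paper's later use of $\beta\overline{\beta}=\widetilde{D}_{4}(\bm{c})$.
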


Lemma~$\ref{lem:4.1}$~(2) shows that $\widetilde{D}_{4}(x_{0}, x_{1}, x_{2}, x_{3}) \in \mathbb{Z}$ holds for any $x_{0}, x_{1}, x_{2}, x_{3} \in \mathbb{Z}$. 
Throughout this paper, 
we assume that $a_{0}, a_{1}, \ldots, a_{15} \in \mathbb{Z}$, and for any $0 \leq i \leq 3$, put 
\begin{align*}
b_{i} &:= (a_{i} + a_{i + 8}) + (a_{i + 4} + a_{i + 12}), & c_{i} &:= (a_{i} + a_{i + 8}) - (a_{i + 4} + a_{i + 12}), \\ 
d_{i} &:= (a_{i} - a_{i + 8}) + (a_{i + 4} - a_{i + 12}), & e_{i} &:= (a_{i} - a_{i + 8}) - (a_{i + 4} - a_{i + 12}). 
\end{align*}
Also, let $\bm{a} := (a_{0}, a_{1}, \ldots, a_{15})$ and let 
\begin{align*}
\bm{b} := (b_{0}, b_{1}, b_{2}, b_{3}), \quad 
\bm{c} := (c_{0}, c_{1}, c_{2}, c_{3}), \quad 
\bm{d} := (d_{0}, d_{1}, d_{2}, d_{3}), \quad 
\bm{e} := (e_{0}, e_{1}, e_{2}, e_{3}). 
\end{align*}
The following relations will be frequently used in this paper: 
\begin{align*}
D_{8 \times 2}(\bm{a}) 
&= D_{8}(a_{0} + a_{8}, a_{1} + a_{9}, \ldots, a_{7} + a_{15}) D_{8}(a_{0} - a_{8}, a_{1} - a_{9}, \ldots, a_{7} - a_{15}) \\ 
&= D_{4}(\bm{b}) \widetilde{D}_{4}(\bm{c}) D_{4}(\bm{d}) \widetilde{D}_{4}(\bm{e}). 
\end{align*}

\begin{rem}\label{rem:4.2}
For any $0 \leq i \leq 3$, the following hold: 
\begin{enumerate}
\item[$(1)$] $b_{i} \equiv c_{i} \equiv d_{i} \equiv e_{i} \pmod{2}$; 
\item[$(2)$] $b_{i} + c_{i} + d_{i} + e_{i} \equiv 0 \pmod{4}$. 
\end{enumerate}
\end{rem}

\begin{lem}\label{lem:4.3}
We have $D_{8 \times 2}(\bm{a}) \equiv D_{4}(\bm{b}) \equiv \widetilde{D}_{4}(\bm{c}) \equiv D_{4}(\bm{d}) \equiv \widetilde{D}_{4}(\bm{e}) \pmod{2}$. 
\end{lem}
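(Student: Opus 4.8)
The statement to prove is Lemma~$\ref{lem:4.3}$: all five quantities $D_{8 \times 2}(\bm{a})$, $D_4(\bm{b})$, $\widetilde{D}_4(\bm{c})$, $D_4(\bm{d})$, $\widetilde{D}_4(\bm{e})$ are congruent modulo~$2$. The natural strategy is to use the factorization already displayed just before the lemma,
$$
D_{8 \times 2}(\bm{a}) = D_{4}(\bm{b}) \, \widetilde{D}_{4}(\bm{c}) \, D_{4}(\bm{d}) \, \widetilde{D}_{4}(\bm{e}),
$$
together with Remark~$\ref{rem:4.2}$~(1), which says $b_i \equiv c_i \equiv d_i \equiv e_i \pmod 2$ for each $i$. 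Since $D_4$ and $\widetilde{D}_4$ are polynomials with integer coefficients, reducing their arguments mod~$2$ shows $D_4(\bm{b}) \equiv D_4(\bm{c}) \equiv D_4(\bm{d}) \equiv D_4(\bm{e}) \pmod 2$, and likewise $\widetilde{D}_4(\bm{b}) \equiv \widetilde{D}_4(\bm{c}) \equiv \cdots \pmod 2$. So it suffices to show two things: (i) $D_4(\bm{v}) \equiv \widetilde{D}_4(\bm{v}) \pmod 2$ for every integer vector $\bm{v}$, and (ii) $D_{8\times 2}(\bm{a})$ is congruent mod~$2$ to one (hence each) of the four factors.

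First I would establish (i). From Lemma~$\ref{lem:4.1}$, one has the explicit polynomial forms
$$
D_4(x_0,x_1,x_2,x_3) = \bigl\{(x_0+x_2)^2 - (x_1+x_3)^2\bigr\}\bigl\{(x_0-x_2)^2 + (x_1-x_3)^2\bigr\}
$$
and
$$
\widetilde{D}_4(x_0,x_1,x_2,x_3) = (x_0^2 - x_2^2 + 2x_1x_3)^2 + (x_1^2 - x_3^2 - 2x_0x_2)^2.
$$
Reducing each mod~$2$: in $D_4$, each bracketed factor becomes $(x_0+x_2)^2 + (x_1+x_3)^2 \equiv x_0+x_1+x_2+x_3$ and $(x_0+x_2)^2 + (x_1+x_3)^2 \equiv x_0+x_1+x_2+x_3$ again (using $u^2 \equiv u$ and sign-insensitivity mod~$2$), so $D_4 \equiv (x_0+x_1+x_2+x_3)^2 \equiv x_0+x_1+x_2+x_3 \pmod 2$; in $\widetilde{D}_4$, the cross terms $2x_1x_3$ and $2x_0x_2$ vanish and one gets $(x_0^2+x_2^2)^2 + (x_1^2+x_3^2)^2 \equiv x_0+x_1+x_2+x_3 \pmod 2$ by the same reductions. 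Hence $D_4(\bm{v}) \equiv \widetilde{D}_4(\bm{v}) \equiv v_0+v_1+v_2+v_3 \pmod 2$, giving (i), and simultaneously showing all four factors are congruent mod~$2$ once their argument vectors agree mod~$2$ — which they do by Remark~$\ref{rem:4.2}$~(1).

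For (ii), write $\varepsilon := b_0+b_1+b_2+b_3 \bmod 2$; by the above each of $D_4(\bm b), \widetilde D_4(\bm c), D_4(\bm d), \widetilde D_4(\bm e)$ is $\equiv \varepsilon \pmod 2$ (the sums $\sum_i c_i$, $\sum_i d_i$, $\sum_i e_i$ all equal $\sum_i b_i$ mod~$2$ by Remark~$\ref{rem:4.2}$~(1)). Then $D_{8\times 2}(\bm a) = D_4(\bm b)\widetilde D_4(\bm c)D_4(\bm d)\widetilde D_4(\bm e) \equiv \varepsilon^4 \equiv \varepsilon \pmod 2$, which closes the chain. The only mild care needed is bookkeeping — confirming that the $u^2 \equiv u \pmod 2$ simplifications and the disappearance of even cross terms are applied consistently; there is no genuine obstacle here, since everything is already set up by Lemma~$\ref{lem:4.1}$ and Remark~$\ref{rem:4.2}$. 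The main (very small) subtlety is just making sure one invokes Lemma~$\ref{lem:4.1}$ rather than the raw definition of $\widetilde{D}_4$, so that $\widetilde{D}_4$ is visibly an integer polynomial in its arguments and the mod~$2$ reduction is legitimate.
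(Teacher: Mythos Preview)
Your proof is correct and follows essentially the same approach as the paper: both use Lemma~\ref{lem:4.1} to reduce $D_4$ and $\widetilde{D}_4$ modulo~$2$ to the linear form $x_0+x_1+x_2+x_3$, and then invoke Remark~\ref{rem:4.2}~(1) to identify the four sums. The paper's proof is terser---it leaves the final step $D_{8\times 2}(\bm a)\equiv\varepsilon^4\equiv\varepsilon\pmod 2$ implicit---but the argument is the same.
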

\begin{proof}
From Lemma~$\ref{lem:4.1}$, 
we have $D_{4}(x_{0}, x_{1}, x_{2}, x_{3}) \equiv x_{0} + x_{1} + x_{2} + x_{3} \equiv \widetilde{D}_{4}(x_{0}, x_{1}, x_{2}, x_{3}) \pmod{2}$. 
Therefore, from Remark~$\ref{rem:4.2}$~(1), the lemma is proved. 
\end{proof}

\subsection{Impossible odd numbers}

Let $\mathbb{Z}_{\rm odd}$ be the set of all odd numbers and $A := \left\{ (8 k - 3) (8 l - 3) \mid k, l \in \mathbb{Z}, \: k \equiv l \pmod{2} \right\}$. 

\begin{lem}\label{lem:5.1}
We have $S({\rm C}_{8} \times {\rm C}_{2}) \cap \mathbb{Z}_{\rm odd} \subset \left\{ 16 m + 1 \mid m \in \mathbb{Z} \right\} \cup A$. 
\end{lem}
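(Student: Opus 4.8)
The plan is to exploit the factorization $D_{8\times 2}(\bm a)=D_4(\bm b)\widetilde D_4(\bm c)D_4(\bm d)\widetilde D_4(\bm e)$ together with Lemma~$\ref{lem:4.3}$. First I would note that if $D_{8\times 2}(\bm a)$ is odd, then by Lemma~$\ref{lem:4.3}$ each of $D_4(\bm b)$, $\widetilde D_4(\bm c)$, $D_4(\bm d)$, $\widetilde D_4(\bm e)$ is odd. By Lemma~$\ref{lem:4.1}$~(2), $\widetilde D_4(\bm c)=(c_0^2-c_2^2+2c_1c_3)^2+(c_1^2-c_3^2-2c_0c_2)^2$ is a sum of two squares, and being odd it must be $\equiv 1\pmod 4$; moreover an odd value of a sum of two squares has all its prime factors $\equiv 1\pmod 4$ except that... actually I only need the value mod something small. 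The key arithmetic input I would isolate is: \emph{an odd value of $\widetilde D_4$ is $\equiv 1\pmod 8$ or $\equiv -3\pmod 8$} — i.e. one of the two squares is $\equiv 1$ and the other $\equiv 0$ or $\equiv 4 \pmod 8$, giving residues $1$ or $5=-3$. Similarly for $D_4(\bm b)$: from Lemma~$\ref{lem:4.1}$~(1), $D_4=\{(b_0+b_2)^2-(b_1+b_3)^2\}\{(b_0-b_2)^2+(b_1-b_3)^2\}$; when this is odd, the first factor is a difference of an odd and an even square hence $\equiv \pm1\pmod 8$ (odd$^2\equiv1$, even$^2\equiv0$ or $4$), and the second factor is a sum of squares that is odd, hence $\equiv1$ or $\equiv-3\pmod 8$. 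So $D_4(\bm b)\pmod 8$ is a product of something in $\{1,-1\}$ and something in $\{1,-3\}$, i.e. lies in $\{1,-1,-3,3\}\pmod 8$.

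Next I would assemble these: $D_{8\times 2}(\bm a)=D_4(\bm b)\widetilde D_4(\bm c)D_4(\bm d)\widetilde D_4(\bm e)$ with $D_4(\bm b),D_4(\bm d)\in\{\pm1,\pm3\}\pmod 8$ and $\widetilde D_4(\bm c),\widetilde D_4(\bm e)\in\{1,-3\}\pmod 8$. Taking the product, the possible residues mod $8$ are $\pm1,\pm3$, but I must show that $\pm3$ only occurs in the constrained form appearing in $A$, while everything $\equiv1\pmod 8$ is automatically $\equiv 1\pmod{16}$ — no, that last claim is false in general, so instead the dichotomy must be: either $D_{8\times2}(\bm a)\equiv1\pmod{16}$, or $D_{8\times2}(\bm a)\in A$. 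To get the mod-$16$ refinement in the first case I would need a finer congruence on the factors: track $D_4(\bm b)\widetilde D_4(\bm c)\pmod{16}$ and $D_4(\bm d)\widetilde D_4(\bm e)\pmod{16}$ using Remark~$\ref{rem:4.2}$, which pins down the parities of $b_i,c_i,d_i,e_i$ relative to each other ($b_i\equiv c_i\equiv d_i\equiv e_i\pmod 2$ and $\sum\equiv0\pmod 4$). The relation $b_i\equiv c_i\equiv d_i\equiv e_i\pmod 2$ is what forces the "$k\equiv l\pmod 2$" condition in the definition of $A$: the values $8k-3$ come from a $D_4$-factor and a $\widetilde D_4$-factor built from the same-parity data, and their "$k$"-parameters are coupled.

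Concretely the structure I anticipate is: show each odd value of $\widetilde D_4$ is $\equiv1\pmod{8}$ when the underlying $c_i$ are all even-summed in a certain way and $\equiv8k-3$ form otherwise, and pair it with the corresponding $D_4$ factor; multiply the two pairings; and check case by case (on the mod-$4$ classes of $b_0+b_1+b_2+b_3$ etc.) that the product is either $\equiv1\pmod{16}$ or a product $(8k-3)(8l-3)$ with $k\equiv l\pmod2$. The main obstacle will be the bookkeeping in this last step: getting the mod-$16$ statement (not just mod-$8$) requires carrying enough information through the two factorization levels, and correctly identifying when the "$-3$-type" factors must pair up with matching parity so that the result lands in $A$ rather than in a larger set like $\{16m-7\}$ unconditionally. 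I would organize this by first proving a self-contained sublemma: \emph{for integers $u,v$ with $u$ odd, $v$ with prescribed parity, $D_4(u,\cdot)\widetilde D_4(\cdot)\equiv 1\pmod{16}$ or $\equiv 8k-3\pmod{16}$}, and then combine two instances of it, using Remark~$\ref{rem:4.2}$ to synchronize the parities of the two instances' parameters, which yields exactly the constraint $k\equiv l\pmod 2$.
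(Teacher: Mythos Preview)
Your plan is essentially the paper's own approach: factor $D_{8\times2}(\bm a)=D_4(\bm b)\widetilde D_4(\bm c)D_4(\bm d)\widetilde D_4(\bm e)$, compute each factor modulo $16$, and use Remark~$\ref{rem:4.2}$ to synchronize the four tuples and force the $k\equiv l\pmod 2$ constraint in $A$. The paper organizes the casework more cleanly than your sketch: it first observes that $D_4(\bm b)$ odd forces $b_0+b_2\not\equiv b_1+b_3\pmod 2$, so the parity pattern of $\bm b$ has either one or three odd entries; a symmetry identity (Lemma~$\ref{lem:5.2}$, a cyclic-shift invariance of the four-factor product) reduces each of these to a single representative pattern; and then two explicit congruences (Lemmas~$\ref{lem:5.3}$ and~$\ref{lem:5.4}$) give $D_4$ and $\widetilde D_4$ modulo $16$ in terms of one designated coordinate, which is exactly the ``self-contained sublemma'' you propose to prove.

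One small slip in your mod-$8$ scratch work: the first factor $(b_0+b_2)^2-(b_1+b_3)^2$ of $D_4(\bm b)$ need not be $\equiv\pm1\pmod 8$; when the even square is $\equiv4$ you get residues $\pm3$ as well. This does not affect your plan since you correctly note that mod-$8$ alone is insufficient and a direct mod-$16$ computation is required, but you should drop that intermediate claim.
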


To prove Lemma~$\ref{lem:5.1}$, 
we use the following three lemmas.

\begin{lem}\label{lem:5.2}
We have 
\begin{align*}
&D_{4}(\bm{b}) \widetilde{D}_{4}(\bm{c}) D_{4}(\bm{d}) \widetilde{D}_{4}(\bm{e}) \\ 
&\quad= D_{4}(b_{1}, b_{2}, b_{3}, b_{0}) \widetilde{D}_{4}(c_{1}, c_{2}, c_{3}, - c_{0}) D_{4}(d_{1}, d_{2}, d_{3}, d_{0}) \widetilde{D}_{4}(e_{1}, e_{2}, e_{3}, - e_{0}) \\ 
&\quad= D_{4}(b_{2}, b_{3}, b_{0}, b_{1}) \widetilde{D}_{4}(c_{2}, c_{3}, - c_{0}, - c_{1}) D_{4}(d_{2}, d_{3}, d_{0}, d_{1}) \widetilde{D}_{4}(e_{2}, e_{3}, - e_{0}, - e_{1}) \\ 
&\quad= D_{4}(b_{3}, b_{0}, b_{1}, b_{2}) \widetilde{D}_{4}(c_{3}, - c_{0}, - c_{1}, - c_{2}) D_{4}(d_{3}, d_{0}, d_{1}, d_{2}) \widetilde{D}_{4}(e_{3}, - e_{0}, - e_{1}, - e_{2}). 
\end{align*}
\end{lem}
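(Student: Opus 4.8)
The plan is to prove Lemma~$\ref{lem:5.2}$ by showing that the product $D_{4}(\bm{b}) \widetilde{D}_{4}(\bm{c}) D_{4}(\bm{d}) \widetilde{D}_{4}(\bm{e})$ equals $D_{8 \times 2}(\bm{a})$ up to a permutation and sign change of the variables $\bm{a}$ that is itself induced by an automorphism of $\mathrm{C}_{8} \times \mathrm{C}_{2}$ (or, more precisely, by a relabeling of group elements under which the group determinant is invariant), and then tracking how that relabeling filters through the two-step factorization $D_{8 \times 2} = D_{4}(\bm{b}) \widetilde{D}_{4}(\bm{c}) D_{4}(\bm{d}) \widetilde{D}_{4}(\bm{e})$. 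Concretely, since $\Theta_{G}(x_{g})$ is invariant under reordering the elements of $G$, and in particular under the automorphism of $\mathrm{C}_{8} \times \mathrm{C}_{2}$ sending $(\overline{1}, \overline{0}) \mapsto (\overline{1}, \overline{0})$ composed with the shift $j \mapsto j + 1$ within each $\mathrm{C}_{8}$-coset, we get a cyclic shift of the first index. Applying this shift to $\bm{a}$ and re-deriving the $b_i, c_i, d_i, e_i$ shows that each of $\bm{b}, \bm{d}$ undergoes the cyclic shift $(x_0, x_1, x_2, x_3) \mapsto (x_1, x_2, x_3, x_0)$, while $\bm{c}, \bm{e}$ (which carry the twist by powers of $\zeta_8$ because they come from the $\overline{4}$-coset branch where $\widetilde{D}_4$ rather than $D_4$ appears) pick up the extra sign on the wrapped-around coordinate, giving $(x_0, x_1, x_2, x_3) \mapsto (x_1, x_2, x_3, -x_0)$. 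Iterating gives the second, third, and fourth lines.

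The cleanest way to organize this is: first I would record the elementary identity $D_4(x_0, x_1, x_2, x_3) = D_4(x_1, x_2, x_3, x_0)$ (cyclic invariance of the circulant determinant of $\mathrm{C}_4$) and the companion identity $\widetilde{D}_4(x_0, x_1, x_2, x_3) = \widetilde{D}_4(x_1, x_2, x_3, -x_0)$, the latter being checked directly from the closed formula in Lemma~$\ref{lem:4.1}$~(2) by substitution; this is a one-line verification. Second, I would observe from the definitions of $b_i, c_i, d_i, e_i$ that cyclically shifting the index $i \mapsto i+1 \pmod 4$ in $\bm{a}$ — that is, replacing $(a_0, a_1, a_2, a_3, a_4, \dots, a_{15})$ by $(a_1, a_2, a_3, a_0, a_5, a_6, a_7, a_4, a_9, a_{10}, a_{11}, a_8, a_{13}, a_{14}, a_{15}, a_{12})$ — shifts each of $\bm{b}, \bm{c}, \bm{d}, \bm{e}$ cyclically by one. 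Third, combining these two observations: the shift on $\bm{a}$ preserves $D_{8\times 2}$ (it is a relabeling of group elements), hence preserves the product; on the other hand, applying the cyclic shift to each factor and using the two identities from step one rewrites $D_4(\bm{b}) \widetilde{D}_4(\bm{c}) D_4(\bm{d}) \widetilde{D}_4(\bm{e})$ as $D_4(b_1, b_2, b_3, b_0) \widetilde{D}_4(c_1, c_2, c_3, -c_0) D_4(d_1, d_2, d_3, d_0) \widetilde{D}_4(e_1, e_2, e_3, -e_0)$. Repeating the shift two and three more times yields the remaining equalities.

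The main obstacle, and the only place that requires genuine care rather than bookkeeping, is pinning down the exact sign pattern on the $\widetilde{D}_4$-factors: because $\widetilde{D}_4(x_0, x_1, x_2, x_3) = D_4(x_0, \zeta_8 x_1, \zeta_8^2 x_2, \zeta_8^3 x_3)$, a cyclic shift of the underlying variables interacts with the $\zeta_8$-twist, and one must verify that the accumulated phase $\zeta_8^4 = -1$ lands on precisely the coordinate that wraps around (and, on further iterations, on the appropriate trailing coordinates, producing the sign patterns $(-c_0)$, then $(-c_0, -c_1)$, then $(-c_0, -c_1, -c_2)$ displayed in the three lines). This is most safely done by directly substituting into the explicit quadratic form for $\widetilde{D}_4$ from Lemma~$\ref{lem:4.1}$~(2) and checking invariance, rather than by manipulating the $\zeta_8$-expression formally. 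I would also double check that the claimed index shift on $\bm{a}$ genuinely corresponds to a reordering of the elements of $\mathrm{C}_8 \times \mathrm{C}_2$ — it is the left translation action combined with the cyclic structure — so that the invariance of $D_{8\times 2}$ under it is legitimate; alternatively one can bypass this by verifying the first equality purely through the factorization identities of Lemma~$\ref{lem:4.1}$ applied to both sides, which sidesteps any appeal to group-element relabeling at the level of $\mathrm{C}_8 \times \mathrm{C}_2$ itself.
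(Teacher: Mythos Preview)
Your core idea --- apply a cyclic-shift identity to each of the four factors separately --- is exactly the paper's approach, and it is the right one. However, one of your two identities is wrong: you assert
\[
D_{4}(x_{0}, x_{1}, x_{2}, x_{3}) = D_{4}(x_{1}, x_{2}, x_{3}, x_{0})
\]
as ``cyclic invariance of the circulant determinant of $\mathrm{C}_{4}$'', but this is false. From Lemma~\ref{lem:4.1}~(1) (or directly from the Dedekind factorization, where the accumulated phase is $\prod_{j=0}^{3}\zeta_{4}^{3j}=\zeta_{4}^{18}=-1$) one has
\[
D_{4}(x_{0}, x_{1}, x_{2}, x_{3}) = -\,D_{4}(x_{1}, x_{2}, x_{3}, x_{0}).
\]
The even-order circulant determinant is \emph{anti}-invariant under a single cyclic shift, not invariant. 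This sign is not cosmetic: it is precisely why the lemma has \emph{two} $D_{4}$-factors and \emph{two} $\widetilde{D}_{4}$-factors --- the two minus signs from $D_{4}(\bm{b})$ and $D_{4}(\bm{d})$ cancel, while your second identity $\widetilde{D}_{4}(x_{0}, x_{1}, x_{2}, x_{3}) = \widetilde{D}_{4}(x_{1}, x_{2}, x_{3}, -x_{0})$ (which is correct) handles the other two. With that sign corrected, your ``step one'' alone already proves the lemma in one line, and this is literally the paper's proof.

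The surrounding machinery about shifting $\bm{a}$ and invoking invariance of $D_{8\times 2}$ under a ``relabeling of group elements'' is both unnecessary and problematic. The specific permutation you write down --- cycling $(a_{0},a_{1},a_{2},a_{3})\mapsto(a_{1},a_{2},a_{3},a_{0})$ and likewise on each block of four --- is not left translation by any element of $\mathrm{C}_{8}\times\mathrm{C}_{2}$, nor is it induced by a group automorphism (it fixes $(\overline{4},\overline{0})$ and $(\overline{0},\overline{1})$ but moves $(\overline{1},\overline{0})$), so the claimed invariance of $D_{8\times 2}$ under it is not immediate from the general theory and would itself require proof. Moreover, even granting that invariance, the shift sends $\bm{c}$ to $(c_{1},c_{2},c_{3},c_{0})$, not to $(c_{1},c_{2},c_{3},-c_{0})$, so you would still need the $\widetilde{D}_{4}$ identity to reach the form stated in the lemma. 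Drop the relabeling argument entirely and just apply the two corrected identities factor by factor.
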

\begin{proof}
From Lemma~$\ref{lem:4.1}$, we have 
\begin{align*}
D_{4}(x_{0}, x_{1}, x_{2}, x_{3}) = - D_{4}(x_{1}, x_{2}, x_{3}, x_{0}), \quad \widetilde{D}_{4}(x_{0}, x_{1}, x_{2}, x_{3}) &= \widetilde{D}_{4}(x_{1}, x_{2}, x_{3}, - x_{0}). 
\end{align*}
Therefore, the lemma is proved. 
\end{proof}

\begin{lem}\label{lem:5.3}
For any $k, l, m, n \in \mathbb{Z}$, 
the following hold: 
\begin{enumerate}
\item[$(1)$] $D_{4}(2 k + 1, 2 l, 2 m, 2 n) \equiv 8 m + 1 \pmod{16}$; 
\item[$(2)$] $\widetilde{D}_{4}(2 k + 1, 2 l, 2 m, 2 n) \equiv 8 m + 1 \pmod{16}$. 
\end{enumerate}
\end{lem}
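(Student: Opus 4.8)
The plan is to compute $D_4$ and $\widetilde{D}_4$ modulo $16$ on arguments of the stated parity directly from the closed formulas in Lemma~\ref{lem:4.1}. Write $x_0 = 2k+1$, $x_1 = 2l$, $x_2 = 2m$, $x_3 = 2n$. For part~(1), Lemma~\ref{lem:4.1}~(1) gives
\[
D_4(x_0,x_1,x_2,x_3) = \bigl\{(x_0+x_2)^2 - (x_1+x_3)^2\bigr\}\bigl\{(x_0-x_2)^2 + (x_1-x_3)^2\bigr\}.
\]
Here $x_0 \pm x_2 = 2k+1 \pm 2m$ is odd, so $(x_0+x_2)^2$ and $(x_0-x_2)^2$ are each $\equiv 1 \pmod 8$; more precisely $(2k+1+2m)^2 = 4(k+m)(k+m+1) + \cdots$, and I would expand to track the term modulo $16$. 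The quantities $x_1 \pm x_3 = 2l \pm 2n$ are even, so $(x_1+x_3)^2$ and $(x_1-x_3)^2$ are divisible by $4$; in fact $(x_1\pm x_3)^2 = 4(l\pm n)^2 \equiv 0$ or $4 \pmod{16}$ depending on the parity of $l\pm n$. So the first bracket is $\equiv (\text{odd square}) - 4(l+n)^2$ and the second is $\equiv (\text{odd square}) + 4(l-n)^2$, and multiplying two such quantities, every cross term involving one of the $4(\cdot)^2$ factors times a factor $\equiv 1 \pmod 2$ survives only modulo $16$ as $4$ times something; the product $16(l+n)^2(l-n)^2 \equiv 0$. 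The key is then to show the surviving terms collapse to $8m + 1 \pmod{16}$. The main structural point is that $(x_0+x_2)^2(x_0-x_2)^2 = (x_0^2 - x_2^2)^2 = ((2k+1)^2 - 4m^2)^2$; expanding, $(2k+1)^2 = 8\binom{k+1}{2}+1 \equiv 1 + 8 t_k \pmod{16}$ where $t_k = \binom{k+1}{2}$, so $(x_0^2-x_2^2)^2 \equiv (1 + 8t_k - 4m^2)^2 \equiv 1 + 16t_k - 8m^2 + 16m^4 \cdots \equiv 1 - 8m^2 \equiv 1 + 8m \pmod{16}$ (using $m^2 \equiv m \pmod 2$ hence $8m^2 \equiv 8m$). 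The remaining contributions from the $(x_1\pm x_3)^2$ terms then need to be checked to vanish mod $16$ or to contribute a multiple of $16$; I expect, after expansion, that they do, giving $D_4 \equiv 8m+1 \pmod{16}$.

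For part~(2), I would run the same computation with Lemma~\ref{lem:4.1}~(2):
\[
\widetilde{D}_4(x_0,x_1,x_2,x_3) = (x_0^2 - x_2^2 + 2x_1x_3)^2 + (x_1^2 - x_3^2 - 2x_0x_2)^2.
\]
With the parities above, $x_0^2 - x_2^2 = (2k+1)^2 - 4m^2 \equiv 1 + 8t_k - 4m^2 \pmod{16}$ is odd, $2x_1x_3 = 8ln \equiv 0$ or $8 \pmod{16}$, so the first square is an odd square $\equiv 1 + 8m \pmod{16}$ by the same collapse as above plus a possible $8ln$ adjustment inside the square which, being multiplied into an odd base, contributes $16ln\cdot(\text{odd}) \equiv 0$ modulo... wait, $(A + 8ln)^2 = A^2 + 16Aln + 64l^2n^2 \equiv A^2 \pmod{16}$, so indeed the first square is $\equiv 1 + 8m \pmod{16}$. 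The second square has $x_1^2 - x_3^2 - 2x_0x_2 = 4l^2 - 4n^2 - 4m(2k+1)$, which is divisible by $4$, hence its square is divisible by $16$ and vanishes mod $16$. So $\widetilde{D}_4 \equiv 8m + 1 \pmod{16}$.

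The main obstacle is purely bookkeeping: carefully expanding the products in part~(1) and confirming that all the "mixed" terms coming from $(x_1 \pm x_3)^2$ — which are individually only divisible by $4$, not $16$ — either cancel against each other or combine into multiples of $16$, so that the final residue is exactly $8m+1$ and not $8m+1+8$ (i.e. getting the coefficient of the $8$ right, not just that the answer is odd). The cleanest way to organize this is to factor $D_4 = (x_0^2-x_2^2)^2 + (x_0^2-x_2^2)(x_1-x_3)^2 - (x_0^2 - x_2^2 \text{ terms})\cdots$; more precisely, multiplying out $\{P - Q\}\{R + S\}$ with $P = (x_0+x_2)^2$, $Q=(x_1+x_3)^2$, $R=(x_0-x_2)^2$, $S=(x_1-x_3)^2$ gives $PR + PS - QR - QS$, where $PR = (x_0^2-x_2^2)^2$, $QS = (x_1^2-x_3^2)^2$ is divisible by $16$, and $PS - QR = (x_0+x_2)^2(x_1-x_3)^2 - (x_1+x_3)^2(x_0-x_2)^2$; since $(x_1-x_3)^2$ and $(x_1+x_3)^2$ are each $\equiv 4(l\mp n)^2 \pmod{16}$ and the odd squares $(x_0\pm x_2)^2$ are $\equiv 1 \pmod 2$, one gets $PS - QR \equiv 4[(l-n)^2 - (l+n)^2]\cdot 1 = -16ln \equiv 0 \pmod{16}$ after noting the odd factors differ by a multiple of $8$ whose product with $4(\cdot)^2$ is a multiple of $16$. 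Hence $D_4 \equiv (x_0^2-x_2^2)^2 \equiv 1 + 8m \pmod{16}$, completing part~(1); this reduction is the one genuinely delicate step and I would write it out in full.
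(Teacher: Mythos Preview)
Your argument is correct and follows essentially the same approach as the paper: substitute the given parities into the closed formulas of Lemma~\ref{lem:4.1} and reduce modulo $16$. The paper is terser---it just writes each factor (respectively, each square) explicitly in terms of $k,l,m,n$ and asserts the congruence---whereas you organize part~(1) via the expansion $PR + PS - QR - QS$ and isolate $PR \equiv 1+8m$ with the other pieces vanishing mod $16$; this is the same computation carried out with slightly more visible bookkeeping.
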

\begin{proof}
Let $D := D_{4}(2 k + 1, 2 l, 2 m, 2 n)$ and $\widetilde{D} := \widetilde{D}_{4}(2 k + 1, 2 l, 2 m, 2 n)$. 
Then we have 
\begin{align*}
D 
&= \left\{ 4 (k + m)^{2} + 4 (k + m) + 1 - 4 (l + n)^{2} \right\} \left\{ 4 (k - m)^{2} + 4 (k - m) + 1 + 4 (l - n)^{2} \right\} \\ 
&\equiv 8 m + 1 \pmod{16}, \\ 
\widetilde{D} 
&= \left\{ 4 k (k + 1) + 1 - 4 m^{2} + 8 l n \right\}^{2} + \left\{ 4 l^{2} - 4 n^{2} - 8 k m - 4 m \right\}^{2} \\ 
&\equiv 8 m + 1 \pmod{16}. 
\end{align*}
\end{proof}

\begin{lem}\label{lem:5.4}
For any $k, l, m, n \in \mathbb{Z}$, 
the following hold: 
\begin{enumerate}
\item[$(1)$] $D_{4}(2 k, 2 l + 1, 2 m + 1, 2 n + 1) \equiv 8 (k + l + n) - 3 \pmod{16}$; 
\item[$(2)$] $\widetilde{D}_{4}(2 k, 2 l + 1, 2 m + 1, 2 n + 1) \equiv 8 (k + l + n) + 1 \pmod{16}$. 
\end{enumerate}
\end{lem}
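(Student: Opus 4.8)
The plan is to prove Lemma~$\ref{lem:5.4}$ by direct computation, exactly in the spirit of the proof of Lemma~$\ref{lem:5.3}$, but now tracking things modulo $16$ when the first argument is even and the remaining three arguments are odd. First I would set $D := D_{4}(2k, 2l+1, 2m+1, 2n+1)$ and $\widetilde{D} := \widetilde{D}_{4}(2k, 2l+1, 2m+1, 2n+1)$ and substitute into the closed forms supplied by Lemma~$\ref{lem:4.1}$, namely
\begin{align*}
D_{4}(x_{0}, x_{1}, x_{2}, x_{3}) &= \left\{ (x_{0} + x_{2})^{2} - (x_{1} + x_{3})^{2} \right\} \left\{ (x_{0} - x_{2})^{2} + (x_{1} - x_{3})^{2} \right\}, \\
\widetilde{D}_{4}(x_{0}, x_{1}, x_{2}, x_{3}) &= (x_{0}^{2} - x_{2}^{2} + 2 x_{1} x_{3})^{2} + (x_{1}^{2} - x_{3}^{2} - 2 x_{0} x_{2})^{2}.
\end{align*}

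For part $(1)$: with $x_{0} = 2k$ even and $x_{1}, x_{2}, x_{3}$ odd, the factor $(x_{0}+x_{2})^{2} - (x_{1}+x_{3})^{2}$ is $\text{odd}^{2} - \text{even}^{2}$, so it is odd, and I would reduce $(x_{0}+x_{2})^{2} = (2k+2m+1)^{2} \equiv 4(k+m)(k+m+1)+1 \equiv 1 \pmod 8$ while $(x_{1}+x_{3})^{2} = (2l+2n+2)^{2} = 4(l+n+1)^{2} \equiv 4(l+n+1) \pmod 8$ (using $t^2 \equiv t \pmod 2$); so the first factor is $\equiv 1 - 4(l+n+1) \pmod 8$. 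The second factor $(x_{0}-x_{2})^{2} + (x_{1}-x_{3})^{2}$ is again odd, with $(x_{0}-x_{2})^{2} = (2k-2m-1)^{2} \equiv 1 \pmod 8$ and $(x_{1}-x_{3})^{2} = (2l-2n)^{2} = 4(l-n)^{2} \equiv 4(l-n) \pmod 8$; so the second factor is $\equiv 1 + 4(l-n) \pmod 8$. Multiplying two numbers each $\equiv 1 \pmod 4$, the product mod $16$ depends only on the sum of their "$4\cdot(\,\cdot\,)$" parts mod $2$: $D \equiv 1 + 4\big(-(l+n+1) + (l-n)\big) = 1 + 4(-2n-1) \equiv 1 - 4 \equiv -3 \pmod 8$ at the mod-$8$ level, and then I would carry one more digit to get the mod-$16$ statement $D \equiv 8(k+l+n) - 3$. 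Getting the $8(k+l+n)$ term requires keeping the $k$-dependence that only shows up at the $8$-level: e.g. $(2k+2m+1)^2 = 4k^2 + 4m^2 + 1 + 8km + 4k + 4m \equiv 4k + 4m + 1 + 4k^2 + 4m^2 \pmod 8$, and $4k^2 \equiv 4k \pmod 8$, so this is $\equiv 8k + 8m + 1 \equiv 1 \pmod 8$ but contributes an $8$-bit $8(k+m)$ — I would track all such $8$-bits carefully and collect them. A clean way to organize this is to write each of the two factors as $1 + 4u + 8v$ and expand the product mod $16$ as $1 + 4(u_1 + u_2) + 8(v_1 + v_2 + u_1 u_2)$, then reduce.

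For part $(2)$: here $x_{0}^{2} - x_{2}^{2} + 2x_{1}x_{3}$ with $x_0$ even, $x_1,x_2,x_3$ odd is $\text{even} - \text{odd} + \text{even} = $ odd, and $x_{1}^{2} - x_{3}^{2} - 2x_{0}x_{2}$ is $\text{odd} - \text{odd} - \text{even} = $ even; so $\widetilde D = \text{odd}^2 + \text{even}^2$. I would compute $x_0^2 - x_2^2 + 2x_1x_3 = 4k^2 - (2m+1)^2 + 2(2l+1)(2n+1)$ modulo $4$ (since an odd square mod $16$ is determined by the number mod $8$, but more simply $\text{odd}^2 \equiv 1 \pmod 8$), getting this quantity $\equiv -1 + 2 \equiv 1 \pmod 4$ hence its square $\equiv 1 \pmod 8$, and then track the $8$-bit contributions: the odd quantity is $\equiv 1 + 2(\text{something})\pmod 4$? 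No — it is odd and $\equiv 1 \pmod 4$, so write it as $1 + 4w$ and its square is $1 + 8w \pmod{16}$. For the even quantity $x_1^2 - x_3^2 - 2x_0x_2 = (2l+1)^2 - (2n+1)^2 - 4k(2m+1) = 4l(l+1) - 4n(n+1) - 4k(2m+1) \equiv -4k \pmod 8$ (since $l(l+1), n(n+1)$ are even), so its square is $\equiv 16k^2 \equiv 0 \pmod{16}$. Hence $\widetilde D \equiv 1 + 8w \pmod{16}$ where $1 + 4w \equiv x_0^2 - x_2^2 + 2x_1x_3 \pmod{16}$, i.e. $4w \equiv -(2m+1)^2 + 2(2l+1)(2n+1) \pmod{16}$ (dropping $4k^2 \equiv 4k$, which contributes to $w$); expanding $-(2m+1)^2 = -(4m^2+4m+1) \equiv -4m(m+1) - 1 \equiv -1 \pmod 8$ after noting $4m(m+1)\equiv 0\pmod 8$ — so actually $-(2m+1)^2 \equiv -1 \pmod 8$ and I need it mod $16$: $-(2m+1)^2 = -8 \cdot \binom{m+1}{2} \cdot \dots$; cleaner to just say $(2m+1)^2 \equiv 8\lfloor\cdot\rfloor + 1$; and $2(2l+1)(2n+1) = 2(4ln + 2l + 2n + 1) = 8ln + 4l + 4n + 2 \equiv 4l + 4n + 2 \pmod 8$. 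Collecting, $4w \equiv 4k - 1 + 4l + 4n + 2 = 4(k+l+n) + 1 \pmod 8$ — but the left side is a multiple of $4$ and the right side is $\equiv 1 \pmod 4$, so I have made a sign/parity slip and will need to recheck: the point is that $x_0^2 - x_2^2 + 2x_1x_3$ is odd, $\equiv 1 \pmod 4$, write it as $1 + 4w$, then $w$ is determined mod $2$ by the number mod $8$, and $\widetilde D \equiv 1 + 8w \pmod{16}$. I would carefully evaluate the number mod $8$: $4k^2 \equiv 4k$, $-(2m+1)^2 \equiv -1$ (as $4m^2+4m \equiv 0 \pmod 8$), $2(2l+1)(2n+1) \equiv 2 \pmod 8$ (as $8ln + 4l + 4n = 4(2ln+l+n) \equiv 0 \pmod 8$ since $2ln + l + n \equiv l + n \equiv ?$ — hmm $l+n$ need not be even, so $4(l+n) \not\equiv 0$); so in fact $2(2l+1)(2n+1) \equiv 4l + 4n + 2 \pmod 8$ and the total is $4k - 1 + 4l + 4n + 2 = 4(k+l+n) + 1 \pmod 8$, i.e. $1 + 4(k+l+n) \pmod 8$, so $4w \equiv 4(k+l+n) \pmod 8$, i.e. $w \equiv k+l+n \pmod 2$, giving $\widetilde D \equiv 1 + 8w \equiv 1 + 8(k+l+n) \pmod{16}$, as claimed.

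The main obstacle — really the only one — is bookkeeping: each quantity must be reduced to a point where its square is known mod $16$, which means knowing the quantity itself mod $8$ (for an odd number) or mod $4$ (for an even number, since then the square is already $0$ mod $16$ once it is $\equiv 0 \pmod 4$), and the cross-term $8\,u_1u_2$ in the product of two factors $1 + 4u_i$ must not be forgotten in part $(1)$. I would therefore present the proof by: (i) substituting into Lemma~$\ref{lem:4.1}$; (ii) using the elementary facts $t^2 \equiv t \pmod 2$, $(2t+1)^2 \equiv 1 \pmod 8$, $4t(t+1) \equiv 0 \pmod 8$; (iii) reducing each factor to the form $1 + 4u_i + 8v_i$ (part 1) or $1 + 4w$ and $0 \pmod{16}$ (part 2); and (iv) multiplying/adding and collecting the coefficient of $8$ modulo $2$. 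Everything is a finite, mechanical congruence computation, so no genuine difficulty arises beyond organizing it so the reader can verify it — I would keep the displayed intermediate lines minimal, as the authors did in Lemma~$\ref{lem:5.3}$.
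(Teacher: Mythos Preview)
Your approach is correct and essentially identical to the paper's: both substitute directly into the closed forms of Lemma~\ref{lem:4.1} and reduce carefully modulo $16$ using $t^{2}\equiv t\pmod 2$, $(2t+1)^{2}\equiv 1\pmod 8$, and $4t(t+1)\equiv 0\pmod 8$. One small slip in your bookkeeping for part~(1): when multiplying two factors of the form $1+4u_i$, the cross term is $16u_1u_2$, not $8u_1u_2$, so it actually vanishes modulo $16$ --- which only makes the computation cleaner.
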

\begin{proof}
Let $D := D_{4}(2 k, 2 l + 1, 2 m + 1, 2 n + 1)$ and $\widetilde{D} := \widetilde{D}_{4}(2 k, 2 l + 1, 2 m + 1, 2 n + 1)$. 
Then, 
\begin{align*}
D 
&= \left\{ 4 (k + m)^{2} + 4 (k + m) + 1 - 4 (l + n + 1)^{2} \right\} \left\{ 4 (k - m)^{2} - 4 (k - m) + 1 + 4 (l - n)^{2} \right\} \\ 
&\equiv 8 (k + l + n) - 3 \pmod{16}, \\ 
\widetilde{D} 
&= \left\{ 4 k^{2} - 4 m (m + 1) + 8 l n + 4 l + 4 n + 1 \right\}^{2} + \left\{ 4 l (l + 1) - 4 n (n + 1) - 8 k m - 4 k \right\}^{2} \\ 
&\equiv 8 (k + l + n) + 1 \pmod{16}. 
\end{align*}
\end{proof}

\begin{proof}[Proof of Lemma~$\ref{lem:5.1}$]
Let $D_{8 \times 2}(\bm{a}) = D_{4}(\bm{b}) \widetilde{D}_{4}(\bm{c}) D_{4}(\bm{d}) \widetilde{D}_{4}(\bm{e}) \in \mathbb{Z}_{\rm odd}$. 
Then, $b_{0} + b_{2} \not\equiv b_{1} + b_{3} \pmod{2}$ holds since $D_{4}(\bm{b})$ is odd. 
We prove the following: 
\begin{enumerate}
\item[(i)] If exactly three of $b_{0}, b_{1}, b_{2}, b_{3}$ are even, then $D_{8 \times 2}(\bm{a}) \in \left\{ 16 m + 1 \mid m \in \mathbb{Z} \right\}$; 
\item[(ii)] If exactly one of $b_{0}, b_{1}, b_{2}, b_{3}$ is even, then $D_{8 \times 2}(\bm{a}) \in A$. 
\end{enumerate}
First, we prove (i). 
If  $\bm{b} \equiv (1, 0, 0, 0) \pmod{2}$, 
then there exist $m_{i} \in \mathbb{Z}$ satisfying $b_{2} = 2 m_{0}$, $c_{2} = 2 m_{1}$, $d_{2} = 2 m_{2}$, $e_{2} = 2 m_{3}$ and $\sum_{i = 0}^{3} m_{i} \equiv 0 \pmod{2}$ from Remark~$\ref{rem:4.2}$. 
Therefore, from Lemma~$\ref{lem:5.3}$, 
$D_{8 \times 2}(\bm{a}) \equiv \prod_{i = 0}^{3} (8 m_{i} + 1) \equiv 8 \sum_{i = 0}^{3} m_{i} + 1 \equiv 1 \pmod{16}$. 
From this and Lemma~$\ref{lem:5.2}$, 
the remaining three cases are also proved. 
Next, we prove (ii). 
If  $\bm{b} \equiv (0, 1, 1, 1) \pmod{2}$, 
then there exist $k_{i}, l_{i}, n_{i} \in \mathbb{Z}$ satisfying 
$(b_{0}, b_{1}, b_{3}) = (2 k_{0}, 2 l_{0} + 1, 2 n_{0} + 1)$, $(c_{0}, c_{1}, c_{3}) = (2 k_{1}, 2 l_{1} + 1, 2 n_{1} + 1)$, 
$(d_{0}, d_{1}, d_{3}) = (2 k_{2}, 2 l_{2} + 1, 2 n_{2} + 1)$, $(e_{0}, e_{1}, e_{3}) = (2 k_{3}, 2 l_{3} + 1, 2 n_{3} + 1)$ 
and $\sum_{i = 0}^{3} k_{i} \equiv \sum_{i = 0}^{3} l_{i} \equiv \sum_{i = 0}^{3} n_{i} \equiv 0 \pmod{2}$ from Remark~$\ref{rem:4.2}$. 
Therefore, from Lemma~$\ref{lem:5.4}$, we have 
$D_{4}(\bm{b}) \widetilde{D}_{4}(\bm{c}) \equiv (8 r_{0} - 3) (8 r_{1} + 1) \equiv 8 r_{0} + 8 r_{1} - 3 \pmod{16}$ and  
$D_{4}(\bm{d}) \widetilde{D}_{4}(\bm{e}) \equiv (8 r_{2} - 3) (8 r_{3} + 1) \equiv 8 r_{2} + 8 r_{3} - 3 \pmod{16}$, 
where $r_{i} := k_{i} + l_{i} + n_{i}$. 
Thus, there exist $s_{0}, s_{1} \in \mathbb{Z}$ satisfying 
$D_{4}(\bm{b}) \widetilde{D}_{4}(\bm{c}) = 16 s_{0} + 8 r_{0} + 8 r_{1} - 3$, 
$D_{4}(\bm{d}) \widetilde{D}_{4}(\bm{e}) = 16 s_{1} + 8 r_{2} + 8 r_{3} - 3$. 
Let $k := 2 s_{0} + r_{0} + r_{1}$ and $l := 2 s_{1} + r_{2} + r_{3}$. 
Then $D_{8 \times 2}(\bm{a}) = (8 k - 3) (8 l - 3) \in A$ since $k \equiv l \pmod{2}$ holds from $\sum_{i = 0}^{3} r_{i} \equiv 0 \pmod{2}$. 
From this and Lemma~$\ref{lem:5.2}$, 
the remaining three cases are also proved. 
\end{proof}

\subsection{Impossible even numbers}

We will use Kaiblinger's \cite[Theorem~1.1]{MR2914452} results 
$S({\rm C}_{4}) = \mathbb{Z}_{\rm odd} \cup 2^{4} \mathbb{Z}$ and $S({\rm C}_{8}) = \mathbb{Z}_{\rm odd} \cup 2^{5} \mathbb{Z}$. 

\begin{lem}\label{lem:6.1}
We have $S \left( {\rm C}_{8} \times {\rm C}_{2} \right) \cap 2 \mathbb{Z} \subset 2^{10} \mathbb{Z}$. 
\end{lem}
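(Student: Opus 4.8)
The plan is to reduce everything to Kaiblinger's description $S({\rm C}_{8}) = \mathbb{Z}_{\rm odd} \cup 2^{5}\mathbb{Z}$ via the factorization recorded in Section~\ref{sec:4}. Write $p_{i} := a_{i} + a_{i+8}$ and $q_{i} := a_{i} - a_{i+8}$ for $0 \le i \le 7$, and set $\bm{p} := (p_{0}, \ldots, p_{7})$, $\bm{q} := (q_{0}, \ldots, q_{7})$. Then
\[
D_{8 \times 2}(\bm{a}) = D_{8}(a_{0} + a_{8}, \ldots, a_{7} + a_{15})\, D_{8}(a_{0} - a_{8}, \ldots, a_{7} - a_{15}) = D_{8}(\bm{p})\, D_{8}(\bm{q}),
\]
and both $D_{8}(\bm{p}), D_{8}(\bm{q}) \in S({\rm C}_{8})$ since $\bm{p}, \bm{q} \in \mathbb{Z}^{8}$.

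The key step is a parity-propagation observation. Since $p_{i} - q_{i} = 2 a_{i+8}$, we have $\bm{p} \equiv \bm{q} \pmod{2}$ componentwise, and because $D_{8}$ is a polynomial with integer coefficients this forces $D_{8}(\bm{p}) \equiv D_{8}(\bm{q}) \pmod{2}$. (One could equally invoke Lemma~\ref{lem:4.3} here, but this is the cleaner route at the ${\rm C}_8$ level.) Consequently, if $D_{8 \times 2}(\bm{a}) = D_{8}(\bm{p}) D_{8}(\bm{q})$ is even, then at least one of the two factors is even — as $2$ is prime — and hence, by the congruence just noted, \emph{both} factors are even.

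Finally I would combine this with $S({\rm C}_{8}) \cap 2\mathbb{Z} = 2^{5}\mathbb{Z}$: both $D_{8}(\bm{p})$ and $D_{8}(\bm{q})$ lie in $2^{5}\mathbb{Z}$, so their product $D_{8 \times 2}(\bm{a})$ lies in $2^{10}\mathbb{Z}$; as $\bm{a}$ was arbitrary this gives $S({\rm C}_{8} \times {\rm C}_{2}) \cap 2\mathbb{Z} \subset 2^{10}\mathbb{Z}$. There is no serious obstacle in this particular lemma; the only point that does any work is the parity-propagation step, which is exactly what upgrades ``one factor is even'' to ``both factors are even'' and thereby turns the exponent $5$ into $10$. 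The genuinely delicate analysis — distinguishing the $2^{10}$, $2^{11}$, $2^{12}$ strata appearing in Theorem~\ref{thm:1.5} — is where the finer decomposition $D_{8 \times 2}(\bm{a}) = D_{4}(\bm{b}) \widetilde{D}_{4}(\bm{c}) D_{4}(\bm{d}) \widetilde{D}_{4}(\bm{e})$ together with Lemmas~\ref{lem:4.1}--\ref{lem:4.3} will be needed, but none of that is required here.
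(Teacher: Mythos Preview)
Your proof is correct and follows essentially the same approach as the paper: factor $D_{8\times 2}(\bm{a})$ as $D_{8}(\bm{p})D_{8}(\bm{q})$, observe $\bm{p}\equiv\bm{q}\pmod{2}$ so the two $D_{8}$ values share parity, and then invoke $S({\rm C}_{8})\cap 2\mathbb{Z}=2^{5}\mathbb{Z}$ on both factors. The paper's argument is identical up to notation.
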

\begin{proof}
Let $D_{8 \times 2}(\bm{a}) = D_{8}(a_{0} + a_{8}, \ldots, a_{7} + a_{15}) D_{8}(a_{0} - a_{8}, \ldots, a_{7} - a_{15}) \in 2 \mathbb{Z}$. 
Since $D_{8}(a_{0} + a_{8}, \ldots, a_{7} + a_{15}) \equiv D_{8}(a_{0} - a_{8}, \ldots, a_{7} - a_{15}) \pmod{2}$ 
holds from $a_{i} + a_{i + 8} \equiv a_{i} - a_{i + 8} \pmod{2}$, 
we have $D_{8}(a_{0} + a_{8}, \ldots, a_{7} + a_{15}), D_{8}(a_{0} - a_{8}, \ldots, a_{7} - a_{15}) \in S({\rm C}_{8}) \cap 2 \mathbb{Z} = 2^{5} \mathbb{Z}$. 
Therefore, $D_{8 \times 2}(\bm{a}) \in 2^{10} \mathbb{Z}$. 
\end{proof}

\begin{lem}\label{lem:6.2}
Let $p_{i} = a_{i}^{2} + b_{i}^{2} \equiv 1 \pmod{8}$ be a prime with $a_{i} \pm b_{i} \in \left\{ 8 m \pm 1 \mid m \in \mathbb{Z} \right\}$ for each $1 \leq i \leq r$, 
let $p_{r + 1}, \ldots, p_{r + s} \equiv - 1 \pmod{8}$ be primes, 
let $q_{1}, \ldots, q_{t} \equiv 3 \pmod{8}$ be distinct primes, 
and let $k _{1}, \ldots, k_{r + s}$ be non-negative integers. 
Then 
$$
2^{11} p_{1}^{k_{1}} \cdots p_{r}^{k_{r}} p_{r + 1}^{k_{r + 1}} \cdots p_{r + s}^{k_{r + s}} Q \not\in S({\rm C}_{8} \times {\rm C}_{2})
$$
for any $Q \in \left\{ \pm 1, \: \pm q_{1} \cdots q_{t} \right\}$. 
\end{lem}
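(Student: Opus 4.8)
The plan is to argue by contradiction. Suppose $D_{8\times 2}(\bm a)=2^{11}N$ for some $\bm a\in\mathbb Z^{16}$, where $N$ is the odd integer $p_1^{k_1}\cdots p_r^{k_r}p_{r+1}^{k_{r+1}}\cdots p_{r+s}^{k_{r+s}}Q$ of the statement; thus every prime divisor of $N$ is either $\equiv 1\pmod 8$ of the ``$a+b\equiv\pm 1$'' type, or $\equiv 7\pmod 8$, or $\equiv 3\pmod 8$ occurring only to the first power. I will deduce that $N$ must nonetheless be divisible by a prime $\equiv 5\pmod 8$, by a prime $\equiv 1\pmod 8$ with $a+b\equiv\pm 3\pmod 8$, or by the square of a prime $\equiv 3\pmod 8$, contradicting this. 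Throughout I use $D_{8\times 2}(\bm a)=D_4(\bm b)\widetilde D_4(\bm c)D_4(\bm d)\widetilde D_4(\bm e)$, and since $N\neq 0$ all four factors may be assumed nonzero.

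First, a $2$-adic reduction. By Remark~\ref{rem:4.2}~(1) the tuples $\bm b,\bm c,\bm d,\bm e$ share one parity pattern, and Lemma~\ref{lem:4.1} pins down $v_2$ of $D_4,\widetilde D_4$ from the parities of the entries: a tuple with an odd number of odd entries gives an odd value; an all-odd tuple gives $v_2(D_4)\ge 4$ and $\widetilde D_4\equiv 8\pmod{16}$; an all-even tuple gives $v_2\ge 4$ for both; two odd entries at opposite positions give $v_2(\widetilde D_4)=2,\ v_2(D_4)\ge 4$; two odd entries at adjacent positions give $v_2(\widetilde D_4)=1,\ v_2(D_4)\ge 4$. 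Since $N$ is odd, $v_2(D_{8\times 2}(\bm a))=11$, and comparing with $v_2\ge 10$ from Lemma~\ref{lem:6.1}, the only admissible pattern is two odd entries at adjacent positions; then $v_2(\widetilde D_4(\bm c))=v_2(\widetilde D_4(\bm e))=1$ and $\{v_2(D_4(\bm b)),v_2(D_4(\bm d))\}=\{4,5\}$. By Lemma~\ref{lem:5.2} I may take the two odd positions to be $\{0,1\}$, and by the substitution $a_{i+8}\mapsto -a_{i+8}$ (which interchanges $(\bm b,\bm c)$ with $(\bm d,\bm e)$ and fixes the product) I may assume $v_2(D_4(\bm b))=5$ and $v_2(D_4(\bm d))=4$.

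Next I record the arithmetic structure of the odd parts. By Lemma~\ref{lem:4.1}~(1), $D_4(\bm b)=\bigl((b_0+b_2)^2-(b_1+b_3)^2\bigr)\bigl((b_0-b_2)^2+(b_1-b_3)^2\bigr)$, where in this pattern the first factor is a difference of two odd squares and equals the product of the rational linear forms $b_0+b_1+b_2+b_3$ and $b_0-b_1+b_2-b_3$, while the second factor is twice a positive odd sum of two squares; likewise for $D_4(\bm d)$. Hence $D_4(\bm b)/32$ and $D_4(\bm d)/16$ are odd integers of unrestricted sign, whose odd parts as written are a linear-form contribution (about which a priori nothing is known) times a sum-of-two-squares contribution in which primes $\equiv 3\pmod 4$ occur to even powers. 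By Lemma~\ref{lem:4.1}~(2), $\widetilde D_4(\bm c)=A_c^2+B_c^2$ with $A_c,B_c$ odd here; moreover $\widetilde D_4(x_0,x_1,x_2,x_3)$ is the norm from $\mathbb Z[\zeta_8]$ of $x_0+\zeta_8 x_1+\zeta_8^2x_2+\zeta_8^3x_3$, and since $\mathbb Z[\zeta_8]$ is a PID in which $2$ is totally ramified (with $1-\zeta_8$ above $2$, of residue degree $1$), $\widetilde D_4(\bm c)/2$ is itself a positive odd norm from $\mathbb Z[\zeta_8]$, so every prime $\equiv 3,5,7\pmod 8$ divides it to an even power; the same holds for $\widetilde D_4(\bm e)/2$. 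Writing $N=\pm\,\mathrm{odd}(D_4(\bm b))\,\mathrm{odd}(\widetilde D_4(\bm c))\,\mathrm{odd}(D_4(\bm d))\,\mathrm{odd}(\widetilde D_4(\bm e))$, it follows that a prime $\equiv 5\pmod 8$, or a prime $\equiv 3\pmod 8$ to an odd power, can enter $N$ only through $\mathrm{odd}(b_0\pm b_1+b_2\pm b_3)$ or $\mathrm{odd}(d_0\pm d_1+d_2\pm d_3)$.

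The final step is to feed in the congruences of Remark~\ref{rem:4.2}~(2) together with their companions $b_i+c_i-d_i-e_i\equiv b_i-c_i+d_i-e_i\equiv 0\pmod 4$ (the conditions that make $a_i,a_{i+4},a_{i+8},a_{i+12}$ integral), and to expand $D_4(\bm b),\widetilde D_4(\bm c),D_4(\bm d),\widetilde D_4(\bm e)$ modulo $16$ (a few pieces modulo $32$) in the two-odd pattern, in the manner of Lemmas~\ref{lem:5.3} and~\ref{lem:5.4}, so as to show that these congruences obstruct every configuration in which $N$ is a product solely of the allowed primes: they force at least one of $\widetilde D_4(\bm c)/2,\ \widetilde D_4(\bm e)/2,\ \mathrm{odd}(D_4(\bm b)/32),\ \mathrm{odd}(D_4(\bm d)/16)$ to contribute a prime $\equiv 5\pmod 8$, a prime $\equiv 1\pmod 8$ of the ``$a+b\equiv\pm 3$'' type, or a repeated prime $\equiv 3\pmod 8$. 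I expect this to be the main obstacle: it amounts to deciding, for a prime $p=a^2+b^2\equiv 1\pmod 8$, which of ``$a+b\equiv\pm 1$'' or ``$a+b\equiv\pm 3$'' holds in terms of a congruence of $p$ modulo a power of $2$ and of the decomposition of $a+bi$ inside $\mathbb Z[\zeta_8]$ (a quartic-residue computation), and then verifying, over the finitely many residue classes of $(\bm b,\bm c,\bm d,\bm e)$ permitted by the $v_2$ constraints and Remark~\ref{rem:4.2}, that one of the three excluded kinds of prime always occurs.
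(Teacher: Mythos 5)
Your $2$-adic reduction is essentially the paper's Lemma~$\ref{lem:6.4}$ (which pins down $(\alpha_{i}, \alpha_{j}) \in 2^{3} \mathbb{Z}_{\rm odd} \times 2^{4} \mathbb{Z}_{\rm odd}$ with $\{ i, j \} = \{ 0, 2 \}$ and $\alpha_{1}, \alpha_{3}, \beta \overline{\beta}, \gamma \overline{\gamma} \in 2 \mathbb{Z}_{\rm odd}$), and your observation that primes $\equiv 3 \pmod{4}$ enter the sum-of-two-squares factors to even powers is exactly how the paper channels the $q_{i}$ and the $p_{r + j}$ into $\alpha_{0} \alpha_{2}$. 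But the decisive step is missing: your last paragraph is a plan (``I expect this to be the main obstacle''), not an argument, and as sketched it points in a direction that cannot work. Whether $a + b \equiv \pm 1$ or $\pm 3 \pmod{8}$ for a prime $p = a^{2} + b^{2} \equiv 1 \pmod{8}$ is a quartic-residue condition on $p$ and is not determined by $p$ modulo any power of $2$ (e.g.\ $17$ and $113$ are congruent mod $32$ but of opposite types), so the finite residue-class check you envisage cannot separate the admissible $N$ from the inadmissible ones.

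What the paper actually does is a single congruence obstruction. The assumed prime factorization forces $\alpha_{i} \equiv 8$, $\alpha_{j} \equiv 0$, $\alpha_{1} \equiv \alpha_{3} \equiv 2 \pmod{16}$, because the odd parts of $\alpha_{1}, \alpha_{3}$ are products of primes $\equiv 1 \pmod{8}$ and of squares of primes $\equiv - 1 \pmod{8}$, hence $\equiv 1 \pmod{8}$. Lemma~$\ref{lem:6.5}$~(1)--(2) convert this into $(b_{0} b_{2} + b_{1} b_{3}, \: d_{0} d_{2} + d_{1} d_{3}) \equiv (2, 0)$ or $(0, 2) \pmod{4}$. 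The hypothesis $a_{i} \pm b_{i} \in \{ 8 m \pm 1 \}$ enters only through the multiplicativity result \cite[Lemma~4.8]{Integer16}, which yields $\Re(\beta), \Re(\gamma) \equiv \pm 1 \pmod{8}$ and hence, via Lemma~$\ref{lem:6.5}$~(3)--(4), $c_{0} c_{2} + c_{1} c_{3} \equiv e_{0} e_{2} + e_{1} e_{3} \equiv 0 \pmod{4}$. The four quantities then sum to $2 \pmod{4}$, contradicting the identity of Lemma~$\ref{lem:6.5}$~(5), which encodes the integrality of the $a_{i}$ via Remark~$\ref{rem:4.2}$. None of these three ingredients --- the mod-$16$ expansions relating $\alpha_{0}$ to $\alpha_{1}$ and $\Re(\beta)$ to $c_{0} c_{2} + c_{1} c_{3}$, the mod-$8$ control of $\Re(\beta)$ coming from the hypothesis on the $p_{i}$, and the closing mod-$4$ invariant --- appears in your proposal, so the proof is not complete.
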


Let 
\begin{align*}
\alpha_{0} &:= (b_{0} + b_{2})^{2} - (b_{1} + b_{3})^{2}, &
\alpha_{1} &:= (b_{0} - b_{2})^{2} + (b_{1} - b_{3})^{2}, \\ 
\alpha_{2} &:= (d_{0} + d_{2})^{2} - (d_{1} + d_{3})^{2}, &
\alpha_{3} &:= (d_{0} - d_{2})^{2} + (d_{1} - d_{3})^{2}, \\ 
\beta &:= (c_{0}^{2} - c_{2}^{2} + 2 c_{1} c_{3}) - \zeta_{4} (c_{1}^{2} - c_{3}^{2} - 2 c_{0} c_{2}), & 
\gamma &:= (e_{0}^{2} - e_{2}^{2} + 2 e_{1} e_{3}) - \zeta_{4} (e_{1}^{2} - e_{3}^{2} - 2 e_{0} e_{2} ). 
\end{align*}
Then we have 
$\alpha_{0} \alpha_{1} = D_{4}(\bm{b})$, 
$\alpha_{2} \alpha_{3} = D_{4}(\bm{d})$, 
$\beta \overline{\beta} = \widetilde{D}_{4}(\bm{c})$, 
$\gamma \overline{\gamma} = \widetilde{D}_{4}(\bm{e})$, 
where $\overline{x}$ denotes the complex conjugate of $x \in \mathbb{C}$. 
To prove Lemma~$\ref{lem:6.2}$, 
we use the following remark and two lemmas. 

\begin{rem}\label{rem:6.3}
From Remark~$\ref{rem:4.2}$~$(1)$ and 
\begin{align*}
\alpha_{1} \in 2 \mathbb{Z}_{\rm odd} &\iff b_{0} + b_{2} \equiv b_{1} + b_{3} \equiv 1 \pmod{2}, \\ 
\alpha_{3} \in 2 \mathbb{Z}_{\rm odd} &\iff d_{0} + d_{2} \equiv d_{1} + d_{3} \equiv 1 \pmod{2}, \\ 
\beta \overline{\beta} \in 2 \mathbb{Z}_{\rm odd} &\iff c_{0} + c_{2} \equiv c_{1} + c_{3} \equiv 1 \pmod{2}, \\ 
\gamma \overline{\gamma} \in 2 \mathbb{Z}_{\rm odd} &\iff e_{0} + e_{2} \equiv e_{1} + e_{3} \equiv 1 \pmod{2}, 
\end{align*}
we have 
$\alpha_{1} \in 2 \mathbb{Z}_{\rm odd} \iff \alpha_{3} \in 2 \mathbb{Z}_{\rm odd} \iff \beta \overline{\beta} \in 2 \mathbb{Z}_{\rm odd} \iff \gamma \overline{\gamma} \in 2 \mathbb{Z}_{\rm odd}$. 
\end{rem}

\begin{lem}\label{lem:6.4}
If $D_{8 \times 2}(\bm{a}) \in 2^{11} \mathbb{Z}_{\rm odd}$, 
then we have $(\alpha_{i}, \alpha_{j}) \in 2^{3} \mathbb{Z}_{\rm odd} \times 2^{4} \mathbb{Z}_{\rm odd}$ and 
$\alpha_{1}$, $\alpha_{3}$, $\beta \overline{\beta}$, $\gamma \overline{\gamma} \in 2 \mathbb{Z}_{\rm odd}$, 
where $\{ i, j \} = \{ 0, 2 \}$. 
\end{lem}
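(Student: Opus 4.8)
The plan is to work from the factorization $D_{8 \times 2}(\bm{a}) = \alpha_0 \alpha_1 \beta \overline{\beta} \cdot \alpha_2 \alpha_3 \gamma \overline{\gamma}$ and track the $2$-adic valuation of each factor. First I would record the parity constraints. Since $D_{8 \times 2}(\bm{a})$ is even but $D_{8 \times 2}(\bm{a})/2^{11}$ is odd, Lemma~\ref{lem:4.3} forces $D_4(\bm{b}) \equiv \widetilde{D}_4(\bm{c}) \equiv D_4(\bm{d}) \equiv \widetilde{D}_4(\bm{e}) \equiv 0 \pmod 2$, so none of the eight factors is a unit; in particular each of $\alpha_0\alpha_1$, $\beta\overline\beta$, $\alpha_2\alpha_3$, $\gamma\overline\gamma$ has even $2$-adic valuation at least... — more precisely each contributes at least one factor of $2$ to its respective $D_4$. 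Next I would use Remark~\ref{rem:6.3}: either all of $\alpha_1, \alpha_3, \beta\overline\beta, \gamma\overline\gamma$ lie in $2\mathbb{Z}_{\mathrm{odd}}$, or none does. I would rule out the "none" case: if $\alpha_1$ is not in $2\mathbb{Z}_{\mathrm{odd}}$ then (from the displayed equivalences in Remark~\ref{rem:6.3}) $b_0+b_2 \equiv b_1+b_3 \pmod 2$, and combined with $b_i \equiv c_i \equiv d_i \equiv e_i \pmod 2$ one checks $\alpha_1$ is a sum of two squares of the same parity, hence either odd or divisible by $4$; pushing this through $\beta\overline\beta$, $\gamma\overline\gamma$, $\alpha_3$ as well, the total valuation $v_2(D_{8\times2}(\bm a))$ would be either $0$ (if the even contributions vanish, impossible since $D_{8\times2}$ is even) or a multiple that cannot equal exactly $11$ — an odd number — because each of the four "norm-type" blocks contributes an even valuation and each of $\alpha_0\alpha_1,\ \alpha_2\alpha_3$ splits with one odd factor. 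So we must be in the case $\alpha_1, \alpha_3, \beta\overline\beta, \gamma\overline\gamma \in 2\mathbb{Z}_{\mathrm{odd}}$, which gives $v_2$ of each of these four equal to $1$, contributing $4$ to the total.

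With that settled, $v_2(\alpha_0 \alpha_2) = 11 - 4 = 7$. Here $\alpha_0 = (b_0+b_2)^2 - (b_1+b_3)^2 = (b_0+b_2+b_1+b_3)(b_0+b_2-b_1-b_3)$ and similarly $\alpha_2$. Since $\alpha_1 \in 2\mathbb{Z}_{\mathrm{odd}}$ means $b_0+b_2$ and $b_1+b_3$ are both odd, $b_0+b_2\pm(b_1+b_3)$ are both even; I would invoke Remark~\ref{rem:4.2}(2), $b_0+b_1+b_2+b_3 \equiv 0 \pmod 4$, to get $b_0+b_2+b_1+b_3 \equiv 0 \pmod 4$, hence $v_2(\alpha_0) \geq 1 + 2 = 3$, and likewise $v_2(\alpha_2) \geq 3$. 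Then $v_2(\alpha_0) + v_2(\alpha_2) = 7$ with each $\geq 3$ forces $\{v_2(\alpha_0), v_2(\alpha_2)\} = \{3, 4\}$ — and the quotients $\alpha_0/2^{v_2(\alpha_0)}$, $\alpha_2/2^{v_2(\alpha_2)}$ are odd by definition of $v_2$. Relabelling with $\{i,j\} = \{0,2\}$ gives $(\alpha_i, \alpha_j) \in 2^3\mathbb{Z}_{\mathrm{odd}} \times 2^4\mathbb{Z}_{\mathrm{odd}}$, completing the proof.

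The main obstacle I anticipate is the bookkeeping in showing $v_2(\alpha_0) \geq 3$ rather than just $\geq 2$: one needs the extra congruence $b_0+b_1+b_2+b_3 \equiv 0 \pmod 4$ from Remark~\ref{rem:4.2}(2) applied correctly to the factor $b_0+b_1+b_2+b_3$, and one must simultaneously be sure the complementary factor $b_0+b_2-b_1-b_3$ contributes at least $v_2 \geq 1$ but could contribute more — the argument must allow that slack and still pin the \emph{pair} of valuations down to $\{3,4\}$, which works precisely because the total is the odd number $7$ and $3+3 < 7 < 4+4$. A secondary point requiring care is justifying that the "all four in $2\mathbb{Z}_{\mathrm{odd}}$" dichotomy is the only possibility consistent with $v_2(D_{8\times2}(\bm a)) = 11$; this is where one leans on the parity equivalences of Remark~\ref{rem:6.3} together with the observation that a sum of two squares of equal parity is $\equiv 0$ or $1 \pmod 4$, never $\equiv 2$, so it can never contribute valuation exactly $1$ unless it is of the form $2 \cdot (\text{odd})$, i.e. the $2\mathbb{Z}_{\mathrm{odd}}$ case.
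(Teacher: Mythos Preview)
Your overall route is workable but differs from the paper's, and one step is wrong as written.

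The paper's argument is shorter because it invokes Kaiblinger's $S({\rm C}_4)=\mathbb{Z}_{\rm odd}\cup 2^4\mathbb{Z}$: since $\alpha_0\alpha_1=D_4(\bm b)$ and $\alpha_2\alpha_3=D_4(\bm d)$ are even integer values of $\Theta_{{\rm C}_4}$, each already lies in $2^4\mathbb{Z}$. With $\beta\overline\beta,\gamma\overline\gamma$ even this gives $v_2(D_{8\times 2}(\bm a))\geq 10$; then Remark~\ref{rem:6.3} forces $\beta\overline\beta,\gamma\overline\gamma\in 2\mathbb{Z}_{\rm odd}$ (otherwise both have $v_2\geq 2$ and the total is $\geq 12$), hence $\alpha_1,\alpha_3\in 2\mathbb{Z}_{\rm odd}$, hence $v_2(\alpha_0),v_2(\alpha_2)\geq 3$ with sum $7$. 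You instead analyse the eight factors by hand without the $S({\rm C}_4)$ input; this is legitimate but longer, and your treatment of the ``none'' branch of Remark~\ref{rem:6.3} is loose (to tighten it: if $b_0+b_2$ and $b_1+b_3$ are both even, a direct parity check on each of $\alpha_0,\alpha_1,\alpha_2,\alpha_3,\beta\overline\beta,\gamma\overline\gamma$ shows $v_2\geq 2$ for all six, forcing total $v_2\geq 12$).

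The concrete error is your appeal to Remark~\ref{rem:4.2}(2). That remark states $b_i+c_i+d_i+e_i\equiv 0\pmod 4$ for each fixed $i$; it says nothing about $b_0+b_1+b_2+b_3$, which equals $\sum_{j=0}^{15}a_j$ and is \emph{not} constrained modulo $4$ (e.g.\ $a_0=a_1=1$, other $a_j=0$ gives $b_0+b_2=b_1+b_3=1$ but $b_0+b_1+b_2+b_3=2$). So the step you flag as the ``main obstacle'' fails as stated. The repair is much simpler than what you attempt: once $u:=b_0+b_2$ and $v:=b_1+b_3$ are both odd, $\alpha_0=u^2-v^2\equiv 1-1\equiv 0\pmod 8$ because every odd square is $\equiv 1\pmod 8$; this already gives $v_2(\alpha_0)\geq 3$, and likewise for $\alpha_2$. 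With that in place your closing step $\{v_2(\alpha_0),v_2(\alpha_2)\}=\{3,4\}$ is correct.
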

\begin{proof}
Let $D_{8 \times 2}(\bm{a}) = \alpha_{0} \alpha_{1} \alpha_{2} \alpha_{3} \beta \overline{\beta} \gamma \overline{\gamma} \in 2^{11} \mathbb{Z}_{\rm odd}$. 
Then, from Lemma~$\ref{lem:4.3}$, $\alpha_{0} \alpha_{1} \equiv \alpha_{2} \alpha_{3} \equiv \beta \overline{\beta} \equiv \gamma \overline{\gamma} \equiv 0 \pmod{2}$ holds. 
In particular, $\alpha_{0} \alpha_{1}, \alpha_{2} \alpha_{3} \in S({\rm C}_{4}) \cap 2 \mathbb{Z} = 2^{4} \mathbb{Z}$. 
From this and Remark~$\ref{rem:6.3}$, we have $\beta \overline{\beta}, \gamma \overline{\gamma} \in 2 \mathbb{Z}_{\rm odd}$. 
Therefore, $\alpha_{1}, \alpha_{3} \in 2 \mathbb{Z}_{\rm odd}$. 
\end{proof}

\begin{lem}\label{lem:6.5}
Let $b_{0} + b_{2} \equiv b_{1} + b_{3} \equiv 1 \pmod{2}$. 
Then the following hold: 
\begin{enumerate}
\item[$(1)$] $\alpha_{0} \equiv \alpha_{1} + 4 (b_{0} b_{2} + b_{1} b_{3}) - 2 \pmod{16}$; 
\item[$(2)$] $\alpha_{2} \equiv \alpha_{3} + 4 (d_{0} d_{2} + d_{1} d_{3}) - 2 \pmod{16}$; 
\item[$(3)$] $\Re(\beta) \equiv (- 1)^{c_{2}} + 2 (c_{0} c_{2} + c_{1} c_{3}) \pmod{8}$; 
\item[$(4)$] $\Re(\gamma) \equiv (- 1)^{e_{2}} + 2 (e_{0} e_{2} + e_{1} e_{3}) \pmod{8}$; 
\item[$(5)$] $(b_{0} b_{2} + b_{1} b_{3}) + (c_{0} c_{2} + c_{1} c_{3}) + (d_{0} d_{2} + d_{1} d_{3}) + (e_{0} e_{2} + e_{1} e_{3}) \equiv 0 \pmod{4}$. 
\end{enumerate}
\end{lem}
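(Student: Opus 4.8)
The plan is to prove all five congruences by direct computation, exploiting the explicit polynomial formulas from Lemma~\ref{lem:4.1} together with the standing hypothesis $b_{0}+b_{2}\equiv b_{1}+b_{3}\equiv 1\pmod 2$ (and its analogues for $\bm c,\bm d,\bm e$, which follow from Remark~\ref{rem:4.2}~(1) and Remark~\ref{rem:6.3}). For parts (1) and (2), I would start from Lemma~\ref{lem:4.1}~(1), which gives $D_{4}(\bm b)=\alpha_{0}\alpha_{1}$ with $\alpha_{0}=(b_{0}+b_{2})^{2}-(b_{1}+b_{3})^{2}$ and $\alpha_{1}=(b_{0}-b_{2})^{2}+(b_{1}-b_{3})^{2}$. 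Expanding both, $\alpha_{0}-\alpha_{1} = 4b_{0}b_{2}+4b_{1}b_{3} - 2(b_{1}+b_{3})^{2}$, so it remains to check $2(b_{1}+b_{3})^{2}\equiv 2\pmod{16}$, i.e. $(b_{1}+b_{3})^{2}\equiv 1\pmod 8$, which holds because $b_{1}+b_{3}$ is odd. Part (2) is identical with $\bm b$ replaced by $\bm d$.

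For parts (3) and (4), I would use Lemma~\ref{lem:4.1}~(2): $\beta = (c_{0}^{2}-c_{2}^{2}+2c_{1}c_{3}) - \zeta_{4}(c_{1}^{2}-c_{3}^{2}-2c_{0}c_{2})$, so $\Re(\beta)=c_{0}^{2}-c_{2}^{2}+2c_{1}c_{3}$. The hypothesis forces exactly one of $c_{0},c_{2}$ to be odd and exactly one of $c_{1},c_{3}$ to be odd. If $c_{0}$ is odd and $c_{2}$ even, write $c_{0}=2u+1$; then $c_{0}^{2}\equiv 4u(u+1)+1\equiv 1\pmod 8$ and $c_{2}^{2}\equiv 0\pmod 4$ with $c_{2}^{2}\equiv 0$ or $4\pmod 8$. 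One checks case by case (on the parities of $c_{0},c_{2}$, and separately $c_{1},c_{3}$) that $c_{0}^{2}-c_{2}^{2}\equiv (-1)^{c_{2}}+2(c_{0}c_{2})\pmod 8$ — the cross term $2c_{0}c_{2}$ vanishes mod $8$ unless $c_{0}c_{2}$ is odd, which cannot happen here, so really $c_{0}^{2}-c_{2}^{2}\equiv (-1)^{c_{2}}\pmod 8$ in each admissible case — and that $2c_{1}c_{3}\equiv 2c_{1}c_{3}\pmod 8$ trivially; combining gives the stated form $(-1)^{c_{2}}+2(c_{0}c_{2}+c_{1}c_{3})$. Actually the cleanest route is to observe that under the parity constraints $c_{0}c_{2}+c_{1}c_{3}$ is always even, so the real content of (3) is $\Re(\beta)\equiv(-1)^{c_{2}}\pmod 4$, and then track the mod-$8$ refinement through the squares; I would present it as a short finite case check. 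Part (4) is the same argument with $\bm c$ replaced by $\bm e$.

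For part (5), the natural approach is to go back to the original variables. Recall $b_{i},c_{i},d_{i},e_{i}$ are the four signed sums of $(a_{i}+a_{i+8})$ and $(a_{i+4}+a_{i+12})$, respectively of $(a_{i}-a_{i+8})$ and $(a_{i+4}-a_{i+12})$; setting $P_{i}:=a_{i}+a_{i+8}$, $Q_{i}:=a_{i+4}+a_{i+12}$, $R_{i}:=a_{i}-a_{i+8}$, $S_{i}:=a_{i+4}-a_{i+12}$ we have $b_{i}=P_{i}+Q_{i}$, $c_{i}=P_{i}-Q_{i}$, $d_{i}=R_{i}+S_{i}$, $e_{i}=R_{i}-S_{i}$. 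Then $b_{i}b_{j}+c_{i}c_{j}=2(P_{i}P_{j}+Q_{i}Q_{j})$ and $d_{i}d_{j}+e_{i}e_{j}=2(R_{i}R_{j}+S_{i}S_{j})$, so the left side of (5) equals $2\big[(P_{0}P_{2}+Q_{0}Q_{2})+(R_{0}R_{2}+S_{0}S_{2})+(P_{1}P_{3}+Q_{1}Q_{3})+(R_{1}R_{3}+S_{1}S_{3})\big]$, and it suffices to show the bracket is even. Now $P_{i}P_{j}+R_{i}R_{j}=2(a_{i}a_{j}+a_{i+8}a_{j+8})$ and $Q_{i}Q_{j}+S_{i}S_{j}=2(a_{i+4}a_{j+4}+a_{i+12}a_{j+12})$, so the bracket is itself even — in fact divisible by $2$ — which gives (5), and indeed a stronger divisibility than needed. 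I would also double-check consistency with the parity hypothesis: the constraint $b_{0}+b_{2}\equiv b_{1}+b_{3}\equiv 1$ translates into $P_{0}+Q_{0}+P_{2}+Q_{2}$ odd etc., but (5) as just derived needs no hypothesis at all, which is a reassuring sanity check.

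The main obstacle I anticipate is part (3)/(4): getting the mod-$8$ bookkeeping of $c_{0}^{2}-c_{2}^{2}+2c_{1}c_{3}$ exactly right, since squares of odd numbers are $1\pmod 8$ but squares of even numbers are only determined mod $8$ by whether the number is $\equiv 0$ or $2\pmod 4$, so a careful split into the (at most four) admissible parity patterns for $(c_{0},c_{2},c_{1},c_{3})$ is required, and one must verify that the compact formula $(-1)^{c_{2}}+2(c_{0}c_{2}+c_{1}c_{3})$ reproduces the right residue in every pattern. Everything else is routine expansion. I would organize the write-up as: (1)\&(2) in one paragraph via Lemma~\ref{lem:4.1}~(1); (3)\&(4) in a second paragraph via Lemma~\ref{lem:4.1}~(2) with the finite case check; (5) in a third paragraph via the substitution to $P_{i},Q_{i},R_{i},S_{i}$ and ultimately back to the $a_{i}$.
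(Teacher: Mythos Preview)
Your treatment of (1) and (2) matches the paper exactly. For (3) and (4) your case-check plan works, but the paper avoids it via the identity $\Re(\beta) = (c_0-c_2)^2 - 2c_2^2 + 2(c_0c_2+c_1c_3)$: since $c_0-c_2$ has the same parity as $c_0+c_2$ and is therefore odd, $(c_0-c_2)^2\equiv 1\pmod 8$, while $-2c_2^2\equiv (-1)^{c_2}-1\pmod 8$ according as $c_2$ is even or odd, giving (3) in one line. One slip in your sketch: the parenthetical claim that ``$2c_0c_2$ vanishes mod $8$ unless $c_0c_2$ is odd'' is false (take $c_0=1$, $c_2=2$), so the further simplification $c_0^2-c_2^2\equiv(-1)^{c_2}\pmod 8$ does not hold in general; fortunately your \emph{unsimplified} claim $c_0^2-c_2^2\equiv(-1)^{c_2}+2c_0c_2\pmod 8$ is correct and is all you actually need, so the argument survives once you drop that aside.

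For (5) your route genuinely differs from the paper's and is cleaner. The paper splits into the four admissible parity patterns of $\bm b\pmod 2$ and reduces each to Remark~\ref{rem:4.2}(2). Your substitution through $P_i,Q_i,R_i,S_i$ and then back to the $a_i$ shows the left side of (5) equals $4\big(a_0a_2+a_8a_{10}+a_4a_6+a_{12}a_{14}+a_1a_3+a_9a_{11}+a_5a_7+a_{13}a_{15}\big)$, hence is divisible by $4$ unconditionally; so (5) holds without the parity hypothesis at all, a slightly stronger statement than the paper proves.
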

\begin{proof}
We obtain (1) from $\alpha_{0} = \alpha_{1} + 4 (b_{0} b_{2} + b_{1} b_{3}) - 2 (b_{1} + b_{3})^{2}$. 
In the same way, we can obtain (2). 
We obtain (3) from $\Re(\beta) = (c_{0} - c_{2})^{2} - 2 c_{2}^{2} + 2 (c_{0} c_{2} + c_{1} c_{3})$. 
In the same way, we can obtain (4). 
We prove (5). 
There are four cases: 
$$
\bm{b} \equiv (0, 0, 1, 1), \: (0, 1, 1, 0), \: (1, 1, 0, 0) \: \: \text{or} \: \: (1, 0, 0, 1) \pmod{2}. 
$$
If $\bm{b} \equiv (0, 0, 1, 1) \pmod{2}$, 
then 
\begin{align*}
&(b_{0} b_{2} + b_{1} b_{3}) + (c_{0} c_{2} + c_{1} c_{3}) + (d_{0} d_{2} + d_{1} d_{3}) + (e_{0} e_{2} + e_{1} e_{3}) \\ 
&\qquad \equiv (b_{0} + b_{1}) + (c_{0} + c_{1}) + (d_{0} + d_{1}) + (e_{0} + e_{1}) \\ 
&\qquad \equiv 0 \pmod{4}
\end{align*}
from Remark~$\ref{rem:4.2}$. 
In the same way, 
the remaining three cases can also be proved. 
\end{proof}

\begin{proof}[Proof of Lemma~$\ref{lem:6.2}$]
We prove by contradiction. 
Assume that there exist $a_{0}, a_{1}, \ldots, a_{15} \in \mathbb{Z}$ satisfying 
$D_{8 \times 2}(\bm{a}) = \alpha_{0} \alpha_{1} \alpha_{2} \alpha_{3} \beta \overline{\beta} \gamma \overline{\gamma} = 2^{11} p_{1}^{k_{1}} \cdots p_{r}^{k_{r}} p_{r + 1}^{k_{r + 1}} \cdots p_{r + s}^{k_{r + s}} Q$, 
where $Q$ is $\pm 1$ or  $\pm q_{1} \cdots q_{t}$. 
Since $\alpha_{1}$, $\alpha_{3}$, $\beta \overline{\beta}$ and $\gamma \overline{\gamma}$ are integers expressible in the form $x^{2} + y^{2}$, 
in the prime factorization of them, 
every prime of the form $4 k + 3$ occurs an even number of times. 
From this fact and Lemma~$\ref{lem:6.4}$, 
there exist $l_{f}, m_{f}, n_{f}, u_{f}, v_{f} \geq 0$ satisfying 
\begin{align*}
\alpha_{i} &= 2^{3} p_{1}^{k_{1} - l_{1} - w_{1}} \cdots p_{r}^{k_{r} - l_{r} - w_{r}} p_{r + 1}^{k_{r + 1} - l_{r + 1} - 2 w_{r + 1}} \cdots p_{r + s}^{k_{r + s} - l_{r + s} - 2 w_{r + s}} Q_{1}, \\ 
\alpha_{j} &= 2^{4} p_{1}^{l_{1}} \cdots p_{r}^{l_{r}} p_{r + 1}^{l_{r + 1}} \cdots p_{r + s}^{l_{r + s}} Q_{2}, \\ 
\alpha_{1} &= 2 p_{1}^{m_{1}} \cdots p_{r}^{m_{r}} p_{r + 1}^{2 m_{r + 1}} \cdots p_{r + s}^{2 m_{r + s}}, \\ 
\alpha_{3} &= 2 p_{1}^{n_{1}} \cdots p_{r}^{n_{r}} p_{r + 1}^{2 n_{r + 1}} \cdots p_{r + s}^{2 n_{r + s}}, \\ 
\beta \overline{\beta} &= 2 p_{1}^{u_{1}} \cdots p_{r}^{u_{r}} p_{r + 1}^{2 u_{r + 1}} \cdots p_{r + s}^{2 u_{r + s}}, \\ 
\gamma \overline{\gamma} &= 2 p_{1}^{v_{1}} \cdots p_{r}^{v_{r}} p_{r + 1}^{2 v_{r} + 1} \cdots p_{r + s}^{2 v_{r + s}}, 
\end{align*}
where $\{ i, j \} = \{ 0, 2 \}$, $w_{f} := m_{f} + n_{f} + u_{f} + v_{f}$ and $Q_{1} Q_{2} = Q$. 
From the above, 
we have $\alpha_{i} \equiv 8, \alpha_{j} \equiv 0, \alpha_{1} \equiv \alpha_{3} \equiv 2 \pmod{16}$. 
Therefore, from Lemma~$\ref{lem:6.5}$~(1) and (2), 
$$
(b_{0} b_{2} + b_{1} b_{3}, d_{0} d_{2} + d_{1} d_{3}) \equiv (2, 0) \: \: \text{or} \: \: (0, 2) \pmod{4}. 
$$
Note that $c_{0} c_{2} + c_{1} c_{3} \equiv e_{0} e_{2} + e_{1} e_{3} \equiv 0 \pmod{2}$ since $b_{0} + b_{2} \equiv b_{1} + b_{3} \equiv 1 \pmod{2}$. 
From \cite[Lemma~4.8]{Integer16}, 
we have $\Re(\beta), \Re(\gamma) \in \{ 8 m \pm 1 \mid m \in \mathbb{Z} \}$. 
From this and Lemma~$\ref{lem:6.5}$~(3) and (4), it follows that $c_{0} c_{2} + c_{1} c_{3} \equiv e_{0} e_{2} + e_{1} e_{3} \equiv 0 \pmod{4}$. 
Therefore, we have 
$$
(b_{0} b_{2} + b_{1} b_{3}) + (c_{0} c_{2} + c_{1} c_{3}) + (d_{0} d_{2} + d_{1} d_{3}) + (e_{0} e_{2} + e_{1} e_{3}) \equiv 2 \pmod{4}. 
$$
This contradicts Lemma~$\ref{lem:6.5}$~$(5)$. 
\end{proof}

\subsection{Possible numbers}

Lemmas~$\ref{lem:5.1}$, $\ref{lem:6.1}$ and $\ref{lem:6.2}$ imply that $S \left( {\rm C}_{8} \times {\rm C}_{2} \right)$ does not include every integer that is not mentioned in Lemmas~$\ref{lem:7.1}$ and $\ref{lem:7.2}$. 

\begin{lem}\label{lem:7.1}
For any $m, n \in \mathbb{Z}$, 
the following are elements of $S \left( {\rm C}_{8} \times {\rm C}_{2} \right)$: 
\begin{enumerate}
\item[$(1)$] $16 m + 1$; 
\item[$(2)$] $(16 m - 3) (16 n - 3)$; 
\item[$(3)$] $(16 m + 5) (16 n + 5)$; 
\item[$(4)$] $2^{10} (2 m + 1)$; 
\item[$(5)$] $2^{12} (2 m + 1)$; 
\item[$(6)$] $2^{12}(2 m)$. 
\end{enumerate}
\end{lem}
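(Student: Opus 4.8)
The plan is to produce explicit integer inputs $a_{0},\dots,a_{15}$ (equivalently, explicit integer tuples $\bm b,\bm c,\bm d,\bm e$ subject to the parity/congruence constraints of Remark~\ref{rem:4.2}) realizing each of the six families, and then to evaluate $D_{8\times 2}(\bm a)=D_{4}(\bm b)\widetilde D_{4}(\bm c)D_{4}(\bm d)\widetilde D_{4}(\bm e)$ using the closed forms in Lemma~\ref{lem:4.1}. For the odd cases (1)--(3) the natural move is to kill three of the four factors by making them equal to $1$: e.g.\ take $\bm c=\bm d=\bm e=(1,0,0,0)$-type tuples so that $\widetilde D_{4}(\bm c)=D_{4}(\bm d)=\widetilde D_{4}(\bm e)=1$ by Lemma~\ref{lem:4.1}, and then choose $\bm b$ so that $D_{4}(\bm b)$ runs over the desired arithmetic progression; one must of course check that such $\bm b,\bm c,\bm d,\bm e$ come from genuine $a_{i}\in\mathbb Z$, which is automatic since the linear map $\bm a\mapsto(\bm b,\bm c,\bm d,\bm e)$ is invertible over $\mathbb Q$ and the required congruences in Remark~\ref{rem:4.2} cut out exactly its image over $\mathbb Z$. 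For case (1), $D_{4}(\bm b)=16m+1$: pick $\bm b=(b_{0},b_{1},0,0)$ with $b_{0}$ odd, $b_{1}$ even, giving $D_{4}(\bm b)=(b_{0}^{2}-b_{1}^{2})(b_{0}^{2}+b_{1}^{2})=b_{0}^{4}-b_{1}^{4}$, and observe this hits every residue; more simply use Lemma~\ref{lem:5.3} to see $D_{4}(2k+1,2l,2m',2n)$ already covers $8m'+1$, and a direct small search or the already-known $S(\mathrm C_{4})\supset\mathbb Z_{\mathrm{odd}}$ (Kaiblinger) shows every $16m+1$ is a value of $D_{4}$, which suffices.

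For cases (2) and (3) I would instead split the product as $\bigl(D_{4}(\bm b)\widetilde D_{4}(\bm c)\bigr)\cdot\bigl(D_{4}(\bm d)\widetilde D_{4}(\bm e)\bigr)$ and realize each bracket independently: by Lemma~\ref{lem:5.4}, choosing $\bm b\equiv(0,1,1,1)$-type and $\bm c\equiv(0,1,1,1)$-type gives $D_{4}(\bm b)\widetilde D_{4}(\bm c)\equiv(8r_{0}-3)(8r_{1}+1)$, and varying the free parameters lets this bracket attain any $16m-3$; likewise, choosing $\bm b\equiv(0,1,1,1)$ but with the other factor tuned one can get $16m+5$ (note $5\equiv -3\pmod 8$, so $16m+5$ and $16n-3$ live in the two classes mod $16$ inside $-3+8\mathbb Z$). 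Concretely I would exhibit one $\mathrm C_{4}$-input giving $D_{4}=16m-3$ and one giving $16m+5$ and one giving $\widetilde D_{4}=1$, then assemble; Kaiblinger's $S(\mathrm C_{4})=\mathbb Z_{\mathrm{odd}}\cup 2^{4}\mathbb Z$ guarantees both $16m-3$ and $16m+5$ are $D_{4}$-values, and $\widetilde D_{4}(1,0,0,0)=1$ from Lemma~\ref{lem:4.1}(2), so $(16m-3)(16n-3)$ arises from $\bm b,\bm d$ each giving $16\cdot-3$-type and $\bm c,\bm e$ trivial, and similarly for (3).

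For the even cases (4)--(6) the strategy is to route through the subgroup factorization $D_{8\times2}(\bm a)=D_{8}(a_{0}+a_{8},\dots)D_{8}(a_{0}-a_{8},\dots)$ together with Kaiblinger's $S(\mathrm C_{8})=\mathbb Z_{\mathrm{odd}}\cup 2^{5}\mathbb Z$. Choosing $a_{i+8}=0$ makes both $\mathrm C_{8}$-arguments equal, so $D_{8\times2}(\bm a)=D_{8}(a_{0},\dots,a_{7})^{2}$; picking an $\mathrm C_{8}$-input with $D_{8}=2^{5}\cdot(\text{odd})$ yields $2^{10}\cdot(\text{odd})^{2}$, and since odd squares are $\equiv1\pmod 8$ while $2^{10}(2m+1)$ needs arbitrary odd multipliers, I would instead take the two $\mathrm C_{8}$-factors genuinely distinct: one equal to $2^{5}$ (say $D_{8}$ of a suitable input) and the other equal to an arbitrary element of $2^{5}\mathbb Z$ or of $\mathbb Z_{\mathrm{odd}}$, letting the product be $2^{5}\cdot 2^{5}\cdot(\text{anything})=2^{10}(2m+1)$ for (4), $2^{10}\cdot 4\cdot(2m+1)=2^{12}(2m+1)$ for (5), and $2^{10}\cdot$(even)$=2^{12}(2m)$ for (6) — here I use that for any target $t$ with $2^{5}\mid t$ there is a $\mathrm C_{8}$-input with $D_{8}=t$, together with a fixed $\mathrm C_{8}$-input achieving $D_{8}=2^{5}$ (odd part $\pm1$), and the requirement $a_{i}+a_{i+8}\equiv a_{i}-a_{i+8}\pmod2$ is harmless since we can solve $a_{i}\pm a_{i+8}$ for any pair of integers of the same parity.

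The main obstacle I anticipate is purely bookkeeping rather than conceptual: one must verify, for each family, that the chosen values of $(\bm b,\bm c,\bm d,\bm e)$ (or of the two $\mathrm C_{8}$-arguments) actually lift to integer tuples $\bm a$ satisfying \emph{all} of the Remark~\ref{rem:4.2} congruences simultaneously, and that the free parameters can be pushed so the resulting product covers the \emph{entire} stated progression (every $m\in\mathbb Z$), not just infinitely many values — in particular getting both signs and the correct residue class mod $16$ in (2) and (3), and hitting every odd multiplier in (4)--(6). I expect this to reduce to a handful of explicit witness tuples plus the observation that the relevant evaluation maps ($D_{4}$, $\widetilde D_{4}$, $D_{8}$) already surject onto the needed progressions by Kaiblinger's theorems and Lemmas~\ref{lem:5.3}--\ref{lem:5.4}, so no genuinely hard step remains.
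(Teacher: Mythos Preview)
Your strategy and the paper's diverge more than you think, and the divergence is where the real work is. The paper's proof consists entirely of explicit witness tuples $\bm a\in\mathbb Z^{16}$ for each of (1)--(6); there is no appeal to Kaiblinger, no decomposition into $D_4$ or $D_8$ factors at this stage. Your plan tries to avoid producing witnesses by invoking $S(\mathrm C_4)=\mathbb Z_{\rm odd}\cup 2^4\mathbb Z$ and $S(\mathrm C_8)=\mathbb Z_{\rm odd}\cup 2^5\mathbb Z$, but these shortcuts do not survive the lifting constraints.

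Concretely, two gaps:

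\emph{Odd cases.} You correctly note that Remark~\ref{rem:4.2} characterizes the image of $\bm a\mapsto(\bm b,\bm c,\bm d,\bm e)$, but condition~(2) is a congruence \emph{modulo~4}, not modulo~2. If you fix $\bm c=\bm d=\bm e=(1,0,0,0)$ to make three factors equal to~$1$, you are forced into $b_0\equiv1$, $b_1\equiv b_2\equiv b_3\equiv0\pmod 4$. Kaiblinger's $S(\mathrm C_4)\supset\mathbb Z_{\rm odd}$ says nothing about whether $D_4$ restricted to this sublattice still surjects onto $\{16m+1\}$; you need an explicit family like $\bm b=(4m+1,4m,4m,4m)$ (which is exactly what the paper's witness $\bm a=(m+1,m,\dots,m)$ produces). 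The same issue recurs, with more moving parts, in (2) and (3).

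\emph{Even cases.} For the $D_8\times D_8$ route you need two $\mathrm C_8$-tuples $p,q$ with $D_8(p)D_8(q)$ equal to the target \emph{and} $p_i\equiv q_i\pmod 2$ for every $i$. Kaiblinger tells you each factor value is attainable, but gives no control over the coordinatewise parity pattern of the realizing tuple, so you cannot simply pair an arbitrary realization of $2^5$ with an arbitrary realization of $2^5(2m+1)$. Indeed, the paper needs two separate witness families just to cover $2^{10}(4m+1)$ and $2^{10}(4m-1)$ in~(4), which suggests this compatibility is genuinely restrictive.

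In short, the ``bookkeeping'' you postpone is the entire content of the lemma: once you actually verify that suitable $\bm b,\bm c,\bm d,\bm e$ (or $p,q$) exist for every $m$, you have reproduced the paper's explicit witnesses. Your plan is a reasonable heuristic for \emph{searching} for those witnesses, but it is not a proof until the witnesses are written down.
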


\begin{lem}\label{lem:7.2}
The following hold: 
\begin{enumerate}
\item[$(1)$] Suppose that $p$ is a prime with $p \equiv - 3 \pmod{8}$, then $2^{11} p (2 m + 1) \in S({\rm C}_{8} \times {\rm C}_{2})$; 
\item[$(2)$] Suppose that $p$ is a prime with $p \equiv 3 \pmod{8}$, then $2^{11} p^{2} (2 m + 1) \in S({\rm C}_{8} \times {\rm C}_{2})$; 
\item[$(3)$] Suppose that $p$ is a prime with $p = a^{2} + b^{2} \equiv 1 \pmod{8}$ and $a + b \equiv \pm 3 \pmod{8}$, then $2^{11} p (2 m + 1) \in S({\rm C}_{8} \times {\rm C}_{2})$. 
\end{enumerate}
\end{lem}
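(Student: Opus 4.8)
The plan is to prove (1)--(3) by direct construction. The engine is the four-fold factorization $D_{8\times2}(\bm a)=D_4(\bm b)\widetilde D_4(\bm c)D_4(\bm d)\widetilde D_4(\bm e)$ from Section~\ref{sec:4}, together with the straightforward converse of Remark~\ref{rem:4.2}: for each fixed coordinate $i$ the map $(a_i,a_{i+4},a_{i+8},a_{i+12})\mapsto(b_i,c_i,d_i,e_i)$ is a Hadamard-type transform whose image is exactly the set of $4$-tuples with $b_i\equiv c_i\equiv d_i\equiv e_i\pmod2$ and $b_i+c_i+d_i+e_i\equiv0\pmod4$. Hence it suffices, for each target value, to exhibit integer vectors $\bm b,\bm c,\bm d,\bm e$ which (i) share a common residue pattern modulo $2$ in every coordinate, (ii) satisfy the mod-$4$ condition in every coordinate, and (iii) have $D_4(\bm b)\widetilde D_4(\bm c)D_4(\bm d)\widetilde D_4(\bm e)$ equal to the prescribed number. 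Since every target lies in $2^{11}\mathbb{Z}_{\rm odd}$, Lemma~\ref{lem:6.4} already pins down the $2$-adic shape of any solution: up to interchanging the two, $D_4(\bm b)\in2^4\mathbb{Z}_{\rm odd}$, $D_4(\bm d)\in2^5\mathbb{Z}_{\rm odd}$, and $\widetilde D_4(\bm c),\widetilde D_4(\bm e)\in2\mathbb{Z}_{\rm odd}$.

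The building blocks come from Lemma~\ref{lem:4.1}. Writing $D_4(\bm b)=\big((b_0+b_2)+(b_1+b_3)\big)\big((b_0+b_2)-(b_1+b_3)\big)\big((b_0-b_2)^2+(b_1-b_3)^2\big)$ and taking $b_0+b_2,b_1+b_3$ both odd, the product of the first two factors equals $8(T_s-T_t)$ with $T_s=\frac{s(s+1)}{2}$, which (via $T_s-T_{s-1}=s$, with both signs) runs over $8$ times every integer; the last factor $(b_0-b_2)^2+(b_1-b_3)^2$ is a genuine sum of two odd squares, hence represents $2N$ for every $N$ that is a sum of two squares---in particular $2p$ when $p\equiv1\pmod4$ and $2p^2$ for every odd prime $p$. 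Thus I can force $D_4(\bm b)=\pm16p$ (resp.\ $\pm16p^2$) by setting $(b_0-b_2,b_1-b_3)$ to a representation of $2p$ (resp.\ $(p,p)$) and the first two factors to $\pm8$; force $D_4(\bm d)=2^5(2m+1)$ by setting its sum-of-squares factor to $2$ and its difference-of-squares factor to $16(2m+1)$; and set $\widetilde D_4(\bm c)=\widetilde D_4(\bm e)=2$ using Lemma~\ref{lem:4.1}(2) (taking $\bm c,\bm e$ to be a fixed pattern of $0$'s and $\pm1$'s). The product is then $\pm2^{11}p(2m+1)$ (resp.\ $\pm2^{11}p^2(2m+1)$), and the sign is harmless because $2m+1$ sweeps all of $\mathbb{Z}_{\rm odd}$.

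The three cases differ only in which representation of the prime is available. In (1), $p\equiv-3\pmod8$ is $\equiv1\pmod4$, so $p=a^2+b^2$ and $2p=(a+b)^2+(a-b)^2$ with $a+b,a-b$ odd, which feeds straight into the construction above. In (2), $p\equiv3\pmod8$ is $\equiv3\pmod4$ and so is not a sum of two squares; I use $2p^2=p^2+p^2$ instead, which is exactly why the exponent $2$ appears. In (3), $p\equiv1\pmod8$ and $p=a^2+b^2$; here $(a+b)(a-b)=a^2-b^2\equiv7\pmod8$, so $\{a+b,a-b\}\equiv\{1,7\}$ or $\{3,5\}\pmod8$, and only in the latter case can the explicit vectors be chosen so as to respect the mod-$8$ and mod-$16$ constraints underlying Lemmas~\ref{lem:6.2} and \ref{lem:6.5}. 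This is precisely the hypothesis $a+b\equiv\pm3\pmod8$, and the complementary case is genuinely excluded by Lemma~\ref{lem:6.2}, so the condition is essential rather than cosmetic.

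The part I expect to be most delicate is not the choice of factorization but the congruence bookkeeping: I must choose the triangular indices $s,t$, the representation $p=a^2+b^2$ and the signs of $a,b$, and the small entries of $\bm b,\bm c,\bm e$ so that all four vectors share one residue pattern modulo $2$---which then pins their residues modulo $4$ tightly enough that $b_i+c_i+d_i+e_i\equiv0\pmod4$ reduces to the statement that an even number of the four $i$-th coordinates are $\equiv3\pmod4$, which one arranges by flipping the sign of $a$ (note that for $p\equiv5\pmod8$ one has $a+b\equiv a-b\pmod4$, so a single sign flip toggles both residues together)---all while the free cofactor $2m+1$ still ranges over every odd integer. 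I anticipate presenting the verification as one or two explicit parametrized families $\bm a=\bm a(m)$ and $\bm a(m,a,b)$ per case, with the conditions of Remark~\ref{rem:4.2} checked by inspection.
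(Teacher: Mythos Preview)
Your overall framework is sound and matches the paper's: both rely on the factorization $D_{8\times2}=D_4(\bm b)\,\widetilde D_4(\bm c)\,D_4(\bm d)\,\widetilde D_4(\bm e)$ and on the (correct) observation that the Hadamard map $(a_i,a_{i+4},a_{i+8},a_{i+12})\mapsto(b_i,c_i,d_i,e_i)$ surjects onto tuples satisfying Remark~\ref{rem:4.2}. The paper, however, does not argue abstractly: it simply writes down explicit parametrized families $\bm a=\bm a(k,l,m,\ldots)$ in each case and states the value of $D_{8\times2}(\bm a)$. The number-theoretic input is packaged as Lemma~\ref{lem:7.3}, which records that each relevant prime admits a representation of a very specific shape---$2p=(8k+3)^2+(8l+1)^2$ for $p\equiv-3$, $p=(4k-1)^2+2(4l-1)^2$ for $p\equiv3$, and a $\widetilde D_4$-type identity for $p\equiv1$ with $a+b\equiv\pm3$. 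These refined representations carry exactly the $\bmod\ 8$ information needed for the $\bmod\ 4$ lifting to go through mechanically.

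Your plan diverges from the paper in two places that look risky. For (2) you propose feeding $2p^2=p^2+p^2$ into $\alpha_1$, whereas the paper uses $p=x^2+2y^2$; for (3) you place $2p$ in $\alpha_1$, whereas the paper places it in a $\widetilde D_4$ factor. Since you have not actually executed the congruence bookkeeping, you have no guarantee that your allocation of factors is compatible with the $\bmod\ 4$ lift in every case---and a quick trial with $p=3$ shows the constraints do not fall out trivially. Your case~(3) analysis already contains an arithmetic slip: for $p=a^2+b^2\equiv1\pmod8$ one has $4\mid b$, hence $a^2-b^2\equiv1\pmod8$ (not $7$) and in fact $a+b\equiv a-b\pmod8$; so $\{a+b,a-b\}$ is a singleton $\bmod\ 8$, not $\{1,7\}$ or $\{3,5\}$. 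The dichotomy $a+b\equiv\pm1$ versus $a+b\equiv\pm3\pmod8$ is still the right one, but the reasoning you give does not establish that the latter case succeeds in \emph{your} construction---you are invoking Lemmas~\ref{lem:6.2} and~\ref{lem:6.5}, which are obstructions, not existence results. Until you exhibit the explicit families and verify the lift conditions, this remains a plausible sketch with a nontrivial gap precisely at the step you yourself flag as delicate.
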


\begin{proof}[Proof of Lemma~$\ref{lem:7.1}$]
We obtain (1) from $D_{8 \times 2}(m + 1, m, \ldots, m) = 16 m + 1$. 
From 
\begin{align*}
&D_{8 \times 2}(\overbrace{m + n, \ldots, m + n}^{5}, m + n - 1, m + n - 1, m + n - 1, m - n, \ldots, m - n) \\
&\quad= (16 m - 3) (16 n - 3), \\
&D_{8 \times 2}(\overbrace{m + n + 1, \ldots, m + n + 1}^{5}, m + n, m + n, m + n, m - n, \ldots, m - n) \\ 
&\quad= (16 m + 5) (16 n + 5), 
\end{align*}
we obtain (2) and (3), respectively. 
We obtain (4) from 
\begin{align*}
&D_{8 \times 2}(m + 1, m + 1, m + 1, m, m, m, m + 1, m, \ldots, m) = 2^{10} (4 m + 1), \\
&D_{8 \times 2}(\overbrace{m, \ldots, m}^{6}, m + 1, m - 1, m, m, m - 1, m, m - 1, m - 1, m, m - 1) = 2^{10} (4 m - 1).  
\end{align*}
We obtain (5) from $D_{8 \times 2}(m + 2, m, \overbrace{m + 1, \ldots, m + 1}^{6}, m, \ldots, m) = 2^{12} (2 m + 1)$. 
From 
\begin{align*}
&D_{8 \times 2}(m + 1, m, m, m + 1, m + 1, m, m + 1, m, m - 1, m - 1, m, m - 1, m, m, m - 1, m) \\ 
&\quad = 2^{12} (2 m), 
\end{align*}
we obtain (6). 
\end{proof}

To prove Lemma~$\ref{lem:7.2}$, 
we use the following lemma. 

\begin{lem}[{\cite[Proof of Theorem~5.1]{Integer16}}]\label{lem:7.3}
The following hold: 
\begin{enumerate}
\item[$(1)$] Suppose that $p \equiv - 3 \pmod{8}$ is a prime, then there exist $k, l \in \mathbb{Z}$ satisfying $$2 p = (8 k + 3)^{2} + (8 l + 1)^{2}; $$ 
\item[$(2)$] Suppose that $p \equiv 3 \pmod{8}$ is a prime, then there exist $k, l \in \mathbb{Z}$ satisfying $$p = (4 k - 1)^{2} + 2 (4 l - 1)^{2}; $$
\item[$(3)$] Suppose that $p = a^{2} + b^{2} \equiv 1 \pmod{8}$ is a prime with $a + b \equiv \pm 3 \pmod{8}$, then there exist $k, l, m, n \in \mathbb{Z}$ satisfying 
\begin{align*}
&2 p = \left\{ (4 k - 1)^{2} - (4 m - 2)^{2} + 2 (2 l - 1) (4 n) \right\}^{2} \\ 
&\qquad + \left\{ (2 l - 1)^{2} - (4 n)^{2} - 2 (4 k - 1) (4 m - 2) \right\}^{2}. 
\end{align*}
\end{enumerate}
\end{lem}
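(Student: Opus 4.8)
The plan is to obtain each of the three statements from a classical representation theorem for an appropriate quadratic or norm form, and then to normalize the representing integers by sign changes and reordering so that they fall in the prescribed residue classes; the congruence hypotheses on $p$ are precisely what guarantee that the required classes are attainable. For $(1)$, note $p \equiv -3 \equiv 5 \pmod{8}$ gives $p \equiv 1 \pmod{4}$, so Fermat's two--square theorem yields $p = a^{2} + b^{2}$ with $a$ odd and $b$ even, hence $2p = (a+b)^{2} + (a-b)^{2}$ is a sum of two odd squares. Since an odd square is $\equiv 1$ or $9 \pmod{16}$ and $2p \equiv 10 \pmod{16}$, exactly one of $a+b$, $a-b$ is $\equiv \pm 1 \pmod{8}$ and the other is $\equiv \pm 3 \pmod{8}$. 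Replacing each of $a \pm b$ by its negative if necessary (harmless, since only their squares appear), we may assume the first is $\equiv 3$ and the second $\equiv 1 \pmod{8}$, i.e.\ of the form $8k+3$ and $8l+1$, which is $(1)$.

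For $(2)$, the principal form $x^{2} + 2y^{2}$ of discriminant $-8$ has class number one, so it represents exactly the primes $p = 2$ and $p \equiv 1, 3 \pmod{8}$; in particular $p = x^{2} + 2y^{2}$ for some $x, y \in \mathbb{Z}$. As $p$ is odd, $x$ is odd, and reducing modulo $8$ (using $p \equiv 3$ and $x^{2} \equiv 1 \pmod{8}$) forces $y$ to be odd as well. Changing the signs of $x$ and $y$ as needed, we arrange $x \equiv y \equiv -1 \pmod{4}$, that is $x = 4k-1$ and $y = 4l-1$, which is $(2)$.

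For $(3)$, first rewrite the right--hand side: by Lemma~\ref{lem:4.1}~$(2)$ it equals $\widetilde{D}_{4}(4k-1,\, 2l-1,\, 4m-2,\, 4n)$, and the polynomial $\beta$ introduced in the text factors as $\beta = (x_{0}^{2} - x_{2}^{2} + 2x_{1}x_{3}) - \zeta_{4}(x_{1}^{2} - x_{3}^{2} - 2x_{0}x_{2}) = (x_{0} + \zeta_{4} x_{2})^{2} - \zeta_{4}(x_{1} + \zeta_{4} x_{3})^{2} \in \mathbb{Z}[\zeta_{4}]$, with $\widetilde{D}_{4}(x_{0},x_{1},x_{2},x_{3}) = \beta\overline{\beta}$. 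So the task is to represent $2p$ as $\beta\overline{\beta}$ subject to $x_{0} \equiv 3 \pmod{4}$, $x_{1}$ odd, $x_{2} \equiv 2 \pmod{4}$, $x_{3} \equiv 0 \pmod{4}$. Since $p \equiv 1 \pmod{8}$ we have $p = a^{2} + b^{2}$ with $a$ odd and $4 \mid b$, so $2p = \gamma\overline{\gamma}$ for $\gamma := (1 + \zeta_{4})(a + b\zeta_{4})$, and every $\delta \in \mathbb{Z}[\zeta_{4}]$ with $\delta\overline{\delta} = 2p$ is an associate of $\gamma$ or of $\overline{\gamma}$. Hence it suffices to find integers $k, l, m, n$ with the prescribed residues for which the associated $\beta$ is one of these, i.e.\ (up to signs) $\Re(\beta) = a \mp b$ and $\Im(\beta) = a \pm b$; this reduces $(3)$ to a pair of simultaneous quadratic equations in $k, l, m, n$. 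Reducing modulo $8$, the parity constraints force $\Re(\beta)$ and $\Im(\beta)$ to be odd with squares $\equiv 9 \pmod{16}$, so both squares into which $2p = (a+b)^{2} + (a-b)^{2}$ splits must be $\equiv 9 \pmod{16}$; since $(a+b)^{2} - (a-b)^{2} = 4ab \equiv 0 \pmod{16}$ (as $4 \mid b$), this holds precisely when $a + b \equiv \pm 3 \pmod{8}$, which is the hypothesis.

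The sign-and-ordering adjustments in $(1)$ and $(2)$, and the norm factorization $\widetilde{D}_{4} = \beta\overline{\beta}$, are routine. The genuine obstacle is $(3)$: turning the compatibility just observed into an actual solution—equivalently, pinning down an explicit $\beta$ of the prescribed shape with $\beta\overline{\beta} = 2p$—is where the hypothesis $a + b \equiv \pm 3 \pmod{8}$ must be used in an essential way. Concretely, one expects to solve the resulting system by choosing the parameters cleverly in terms of $a$ and $b$ (the analogues for $p \equiv \pm 3 \pmod 8$ being simpler and essentially the genus-theoretic statements behind $(1)$ and $(2)$), and the final step is the verifying expansion; this construction and verification is the main work.
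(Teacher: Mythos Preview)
The paper does not prove Lemma~\ref{lem:7.3}; it is quoted verbatim from \cite{Integer16}. So there is no in-paper argument to compare against, and the question is simply whether your argument stands on its own.

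Parts $(1)$ and $(2)$ are fine. For $(1)$, Fermat's two--square theorem plus the mod-$16$ bookkeeping you give is a complete proof. For $(2)$, class number one for $x^{2}+2y^{2}$ gives the representation, the congruence $p\equiv 3\pmod 8$ forces $y$ odd, and sign flips finish it.

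Part $(3)$, however, is not proved. You correctly identify the right-hand side as $\widetilde{D}_{4}(4k-1,2l-1,4m-2,4n)=\beta\overline{\beta}$ with $\beta=(x_{0}+\zeta_{4}x_{2})^{2}-\zeta_{4}(x_{1}+\zeta_{4}x_{3})^{2}$, and your mod-$16$ computation does show that the residue constraints on $(x_{0},x_{1},x_{2},x_{3})$ force $\Re(\beta)^{2}\equiv\Im(\beta)^{2}\equiv 9\pmod{16}$, which is \emph{consistent} with $a+b\equiv\pm 3\pmod 8$. But consistency is only a necessary condition. You then write that ``one expects to solve the resulting system by choosing the parameters cleverly in terms of $a$ and $b$ \dots\ this construction and verification is the main work'' --- and stop. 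That is precisely the content of the lemma: you must exhibit $k,l,m,n$ (equivalently, Gaussian integers $u,v$ in the required residue classes with $u^{2}-\zeta_{4}v^{2}$ an associate of $(1+\zeta_{4})(a+b\zeta_{4})$). Nothing you have written produces such a solution, nor shows that the map $(u,v)\mapsto u^{2}-\zeta_{4}v^{2}$ hits the needed associate class subject to the congruence constraints. Since this is the substantive part of $(3)$, the proposal as it stands has a genuine gap there; the argument in \cite{Integer16} supplies the missing construction.
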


\begin{proof}[Proof of Lemma~$\ref{lem:7.2}$]
First, we prove (1). 
Let 
\begin{align*}
a_{0} &= k + m + 2, 
&a_{1} &= l + m + 1, 
&a_{2} &= - k + m, 
&a_{3} &= - l + m + 1, \\
a_{4} &= k + m + 1, 
&a_{5} &= l + m, 
&a_{6} &= - k + m + 1, 
&a_{7} &= - l + m, \\
a_{8} &= k - m, 
&a_{9} &= l - m, 
&a_{10} &= - k - m, 
&a_{11} &= - l - m - 1, \\
a_{12} &= k - m, 
&a_{13} &= l - m, 
&a_{14} &= - k - m - 1, 
&a_{15} &= - l - m. 
\end{align*}
Then $D_{8 \times 2}(\bm{a}) = 2^{10} \left\{ (8 k + 3)^{2} + (8 l + 1)^{2} \right\} (2 m + 1)$. 
Therefore, from Lemma~$\ref{lem:7.3}$~(1), we obtain (1). 
We prove (2). 
Let 
\begin{align*}
a_{0} &= k + l + m, 
&a_{1} &= k - l + m, 
&a_{2} &= - l + m + 1, 
&a_{3} &= - l + m + 1, \\ 
a_{4} &= - k - l + m + 2, 
&a_{5} &= - k + l + m + 1, 
&a_{6} &= l + m + 1, 
&a_{7} &= l + m, \\ 
a_{8} &= k + l - m, 
&a_{9} &= k - l - m, 
&a_{10} &= - l - m, 
&a_{11} &= - l - m, \\ 
a_{12} &= - k - l - m, 
&a_{13} &= - k + l - m - 1, 
&a_{14} &= l - m - 1, 
&a_{15} &= l - m 
\end{align*}
and $s := 4 k - 1$, $t := 4 l - 1$. 
Then $D_{8 \times 2}(\bm{a}) = 2^{11} (s^{2} + 2 t^{2})^{2} (2 m + 1)$. 
Therefore, (2) is proved from Lemma~$\ref{lem:7.3}$~(2). 
We prove (3). 
Let $l' := \frac{l}{2}$ if $l$ is even; $\frac{l - 1}{2}$ if $l$ is odd and 
\begin{align*}
a_{0} &= k + r + 1, 
&a_{1} &= l' + r, 
&a_{2} &= m + r, 
&a_{3} &= n + r, \\
a_{4} &= - k + r + 1, 
&a_{5} &= - l' + r + \frac{(- 1)^{l} + 1}{2}, 
&a_{6} &= - m + r + 2, 
&a_{7} &= - n + r + 1, \\
a_{8} &= k - r - 1, 
&a_{9} &= l' - r, 
&a_{10} &= m - r, 
&a_{11} &= n - r, \\
a_{12} &= - k - r, 
&a_{13} &= - l' - r + \frac{(- 1)^{l} - 1}{2}, 
&a_{14} &= - m - r, 
&a_{15} &= - n - r - 1. 
\end{align*}
Then $D_{8 \times 2}(\bm{a}) = 2^{10} \widetilde{D}_{4}(4 k - 1, 2 l - 1, 4 m - 2, 4 n) (2 r + 1)$. 
Therefore, (3) is proved from Lemma~$\ref{lem:7.3}$~(3). 
\end{proof}

From Lemmas~$\ref{lem:5.1}$, $\ref{lem:6.1}$, $\ref{lem:6.2}$, $\ref{lem:7.1}$ and $\ref{lem:7.2}$, Theorem~$\ref{thm:1.5}$ is proved.

\clearpage

\bibliography{reference}
\bibliographystyle{plain}

\medskip
\begin{flushleft}
Faculty of Education, 
University of Miyazaki, 
1-1 Gakuen Kibanadai-nishi, 
Miyazaki 889-2192, 
Japan \\ 
{\it Email address}, Naoya Yamaguchi: n-yamaguchi@cc.miyazaki-u.ac.jp \\
{\it Email address}, Yuka Yamaguchi: y-yamaguchi@cc.miyazaki-u.ac.jp 
\end{flushleft}

\end{document}